\journalname{JOTA}
\begin{document}
	\title{
		Perturbation  analysis  on  T-eigenvalues of third-order tensors
	}
	
	\titlerunning{
		Perturbation  analysis  on  T-eigenvalues of third-order tensors
	}
	
\author{Changxin Mo $^{1}$ \and Weiyang Ding $^{2}$ \and Yimin Wei$^{3}$ }
\authorrunning{C. Mo, W. Ding, Y. Wei } 
\institute{
		Communicated by Guoyin Li.\\
	\rule[1pt]{3.8cm}{0.5pt}	
		\begin{itemize}
			\item[\Letter] Yimin Wei
			\item[] \email{ymwei@fudan.edu.cn; yimin.wei@gmail.com}
			\item[] Changxin Mo
			\item[] \email{cxmo16@cqnu.edu.cn; cxmo16@fudan.edu.cn}
			\item[] Weiyang Ding 
			\item[] \email{dingwy@fudan.edu.cn}
			\item[1] School of Mathematical Sciences, Chongqing Normal University, Chongqing,  401331, P. R. of
			China. 
			\item[2] Institute of Science and Technology for Brain-Inspired Intelligence, Fudan University, Shanghai, China; Shanghai Center for Brain Science and Brain-Inspired Technology, Shanghai, China; Key Laboratory of Computational Neuroscience and Brain-Inspired Intelligence (Fudan University), Ministry of Education, China; MOE Frontiers Center for Brain Science, Fudan University, Shanghai, China; Zhangjiang Fudan International Innovation Center. 550025, P. R. of
			China. 
			\item[3] School of
			Mathematical Sciences and Shanghai Key Laboratory of Contemporary
			Applied Mathematics, Fudan University, Shanghai, 200433, P. R. of
			China.
		\end{itemize}
	}
	
	\date{Received: 26 July 2023 / Accepted: date}

\maketitle

\begin{abstract}
This paper concentrates on  perturbation theory concerning the tensor T-eigenvalues within the framework of tensor-tensor multiplication. Notably, it serves as a cornerstone for the extension of  semidefinite programming into the domain of tensor fields, referred to as T-semidefinite programming. 
The analytical perturbation analysis delves into the sensitivity of T-eigenvalues for third-order tensors with square frontal slices, marking the first main part of this study. Three classical results from the matrix domain into the tensor domain are extended. Firstly, this paper  presents the Gershgorin disc theorem for tensors, demonstrating the confinement of all T-eigenvalues within a union of Gershgorin discs.
Afterward, generalizations of the Bauer-Fike theorem are provided, each applicable to different cases involving tensors, including those that are F-diagonalizable and those that are not.
 Lastly, the Kahan theorem is presented, addressing the perturbation of a Hermite tensor by any tensors. Additionally, the analysis establishes connections between the T-eigenvalue problem and various optimization problems. 
The second main part of the paper focuses on tensor pseudospectra theory, presenting four equivalent definitions to characterize tensor $\varepsilon$-pseudospectra. Accompanied by a thorough analysis of their properties and illustrative visualizations, this section also explores the application of tensor $\varepsilon$-pseudospectra  in identifying  more T-positive definite tensors.
\end{abstract}

\keywords{
	Perturbation analysis \and  T-eigenvalues \and  Tensor-tensor multiplication  \and  T-positive semidefiniteness \and  Pseudospectra theory   \and  Gershgorin disc theorem \and  Bauer-Fike theorem     \and  Kahan theorem
}
\subclass{15A18 \and 15A69 \and 90C22 }

\section{Introduction}

Semidefinite programming  is a powerful optimization framework that generalizes linear and quadratic programming to handle a broader class of optimization problems. A recent noteworthy advancement involves the extension of semidefinite programming into the domain of tensor fields, denoted as T-semidefinite programming, a pivotal concept attributed to the  contributions of Zheng, Huang, and Wang \cite{zheng2020t}.  Minimizing the maximum T-eigenvalue of a third-order symmetric tensor exemplifies an important and  specific instance of the T-semidefinite programming:
\begin{equation}\label{TSDP}
	\max _{\eta, \mathbf{z}} \quad     -\eta \quad \text { s.t. } \quad \eta \mathcal{E}_{nnp}-\mathcal{A}(\mathbf{z}) \succeq_{\mathcal{T}} \mathcal{O} ,
\end{equation}
where the identity tensor $\mathcal{E}_{nnp} $ and zero tensor $\mathcal{O} $ are in $\mathbb{R}^{n\times n\times p}$, $\mathcal{A}(\mathbf{z}) \in \mathbb{R}^{n\times n\times p}$ is a symmetric tensor depends linearly on a vector $\mathbf{z}$, and $\mathcal{A} \succeq_{\mathcal{T}} \mathcal{O}$ means that $\mathcal{A}$ belongs to the set of symmetric T-positive semidefinite tensors. 
Notably,  T-semidefinite programming  is particularly pertinent in addressing the T-positive semidefiniteness problem of third-order symmetric tensors, which further involving with T-eigenvalues closely.  For example, the problem \eqref{TSDP} is equivalent with the problem that the minimum T-eigenvalue of tensor $\mathcal{E}_{nnp}-\mathcal{A}(\mathbf{z})$ is nonnegative, i.e., $\lambda_{\min }(\eta\mathcal{E}_{nnp}-\mathcal{A}(\mathbf{z})) \geq 0$. To sum up,  it is evident that T-eigenvalues exhibit an inherent connection with the T-positive semidefiniteness problem, thereby assuming a pivotal role in certain T-semidefinite programming problems within the field of optimization.

The approach of employing pseudospectra localizations for H- and Z-eigenvalues \cite{brazell_solving_2013,mo2019z,qi2005eigenvalues}, aimed at identifying more positive definite tensors, has been investigated by many researchers \cite{KostiPseudospectra2016,LiLiuWei2019}. However, to the best of the authors' knowledge, there has been no literature  systematically considering perturbation analysis related to T-eigenvalues of third-order tensors.
Perturbation theory, a field of study spanning over ninety years, originated from the works of Rayleigh \cite{Rayleigh1927} and Schr\"odinger  \cite{Schrodinger1926} as they investigated eigenvalue problems in vibrating systems and quantum mechanics, respectively. Research on this topic can be broadly categorized into two streams.  One stream focuses on analytic perturbation theory, while the other stream delves into matrices and addresses perturbation bounds. 
Extensive investigations and  pioneering works are encouraged to consult the recent paper \cite{greenbaum2020firstorder} and renowned monographs by Rellich, Sun, Stewart, Kato, and others \cite{kato2013,Rellich1969,stewartsun1990matrix,Sun1987}.

The multiplication of tensors, a fundamental and crucial operation analogous to matrix multiplication, has garnered considerable attention across various scientific disciplines.
In 2008, Kilmer \textit{et al.} \cite{Kilmer2008third} introduced a novel form of tensor multiplication that enables the representation of a third-order tensor as a product of other  third-order tensors. This development stemmed from their endeavor to extend the matrix singular value decomposition  to the realm of tensors. For the purpose of clarification and distinction from other tensor product operations \cite{Golub2013matrix}, this specific type of multiplication is referred to as {\it tensor-tensor multiplication}.
This new tensor-tensor multiplication involves three closely interconnected operators. Let $\mathbb{C}$ denote the complex field. For a third-order tensor $\mathcal{A} \in \mathbb{C}^{n_{1} \times n_{2} \times n_{3}}$, the frontal slices, denoted as $A_{::k}$ or simply $A_k$, can be obtained by fixing the last index. Consequently, the tensor has $n_3$ frontal slices, denoted as $A_1, \ldots, A_{n_3}$, each of which represents a matrix of dimensions $n_{1} \times n_{2}$.   The first operation \text{bcirc}($\cdot$) we will introduce involves the creation of a block circulant matrix using all the frontal slices of a tensor.
That is,
\begin{equation}\label{Definition bcirc}
	\operatorname{bcirc}(\mathcal{A})
	=
	\left[\begin{array}{ccccc}
		{A_{1}} & {A_{n_{3}}} & {A_{n_{3}-1}} & {\cdots} & {A_{2}} \\
		{A_{2}} & {A_{1}} & {A_{n_{3}}} & {\cdots} & {A_{3}} \\
		{A_{3}} & {A_{2}} & {A_{1}} & {\ddots} & {\vdots} \\
		{\vdots} & {\vdots} & {\ddots} & {\ddots} &   {A_{n_{3}}} \\
		{A_{n_{3}}} & {A_{n_{3}-1}} &  {\cdots} &{A_{2}} & {A_{1}}
	\end{array}\right]
\end{equation} 
with size $(n_1n_3) \times (n_2n_3)$.  The tensor representation described here is orientation-specific, which proves useful for applications where the data possesses a fixed orientation, such as time series applications \cite{Kilmer2011}.
The remaining two operations can be considered as the ``inverse" of each other. They are the \text{unfold}($\cdot$) and \text{fold}($\cdot$) commands, defined as follows,
\begin{equation*}
	\operatorname{unfold}(\mathcal{A})=\left[\begin{array}{c}
		A_{1} \\
		A_{2} \\
		\vdots \\
		A_{n_{3}}
	\end{array}\right], \quad \operatorname{fold}(\operatorname{unfold}(\mathcal{A}))=\mathcal{A}.
\end{equation*}
It is observed that the $\operatorname{unfold}(\cdot)$ command transforms a tensor into a matrix, while the $\operatorname{fold}(\cdot)$ command reverses this process by converting the matrix back into a tensor. 
On the basis of the  three operations, \text{bcirc}($\cdot$), \text{unfold}($\cdot$), and \text{fold}($\cdot$), the tensor-tensor multiplication of two tensors, $\mathcal{B} \in \mathbb{C}^{n_{1} \times p \times n_{3}}$ and $\mathcal{C} \in \mathbb{C}^{p \times n_{2} \times n_{3}}$, is defined as follows:
\begin{equation}
	\mathcal{B} * \mathcal{C} :=\operatorname{fold}(\mathrm{bcirc}(\mathcal{B}) \operatorname{unfold}(\mathcal{C})). \label{tensor-tensor multiplication}
\end{equation}
Let $\mathcal{A} = \mathcal{B} * \mathcal{C}$. It can be easily verified that $\mathcal{A} \in \mathbb{C}^{n_{1} \times n_2 \times n_{3}}$.  On the size of the two tensors involved in \eqref{tensor-tensor multiplication} and also their product $\mathcal{A}$, special attention should be given to the first two indices of $\mathcal{B}$ and $\mathcal{C}$ since their frontal slices need to be consistent with matrix multiplication.
The usefulness of tensor-tensor multiplication \eqref{tensor-tensor multiplication} has been demonstrated in various domains in recent years, including but not limited to image processing (such as image deblurring and compression, object and facial recognition), tensor principal component analysis, tensor completion, multilinear control systems, and pattern recognition; see  \cite{cao2022perturbation,chen2023perturbations,cui2021perturbation,han2023t,kilmer2013third,kilmertensortensor,liu2018improved,Ng2022,mo2020time,Kilmer2020ima,tang2023sketch,wang2023solving,wang2022hot,wu2020graph,ZHAO2020137}
and the references therein. Various topics, such as linear complementarity problems and tensor complementarity problems based on tensor-tensor multiplication, have also been studied in recent years \cite{wang2023fixed,wei2023neural}.

The generalization of eigenvalues from matrices to tensors has been studied  through the implementation of  tensor-tensor multiplication. Significant attention and extensive research have been devoted to this field, resulting in a substantial body of work focused on their variants, applications, and theoretical analysis. 
In 2011,  Kilmer, Braman, and Hao \cite{Kilmer2011report} studied various decompositions, such as eigendecomposition, tensor decomposition, and so on.  In \cite{Lund2020}, Lund defined a tensor eigendecomposition for third-order tensors with diagonalizable faces. 
Then, the notion of T-eigenvalues was introduced by Miao, Qi and Wei \cite{Miao2020T} and also Liu and Jin \cite{Jin2020}, establishing a fundamental and significant concept.  Alternative versions and formulations of  eigenvalues of third-order tensors in the context of tensor-tensor multiplication have also been explored by Qi and Zhang \cite{qi2021t}, who referred to them as``eigentuples", and by Beik and Saad \cite{saad2023}, who termed them as ``tubular eigenvalues".   A comprehensive investigation on the relationships between tubular eigenvalues, T-eigenvalues, and eigentuples has been conducted by Beik and Saad \cite{saad2023}. 
 The T-eigenvalues also exhibit a multitude of applications across diverse mathematical domains.  The T-eigenvalues were also utilized by Zheng \textit{et al.} \cite{zheng2020t} to study  the T-positive semidefiniteness  and T-semidefinite programming problems. They also show that T-eigenvalues have a close relationship with many optimization problems. 
The stability of T-eigenvalues was addressed in \cite{Jin2020} to analyze the tensor Lyapunov equation commonly encountered in spatially invariant systems. 
Furthermore, several results from the matrix domain have been extended to the tensor domain. Notable results include the Weyl's and Cauchy's interlacing theorems, the arithmetic-geometric mean inequality, H{\"o}lder inequality, and Minkowski inequality \cite{cao2021tensor,Jin2020}, the inequalities and probability bounds  \cite{chang2022t2,chang2022t}, the Perron-Frobenius type theorem  \cite{yang2023perron}, and the numerical range \cite{pakmanesh2022m}.
 The locus of singular tuples of a complex-valued multi-symmetric
tensor has also been studied in  \cite{Turatti2022}.   Recently, the authors in \cite{el2023spectral} have also conducted a study on spectral computation.
Also, numerous researchers have investigated the properties and functions of multidimensional arrays within the framework established by tensor-tensor multiplication \cite{liu2022weighted,Lund2020,lund2023frechet,Miao2020generalized,Miao2020T,miao2022stochastic}.

The study of T-eigenvalues has emerged as a prominent research area within the field of tensor analysis. Motivated by the aforementioned research, we pay our attention to the perturbation analysis of third-order tensors under the novel tensor-tensor multiplication \eqref{tensor-tensor multiplication} in this paper, encompassing both the extension of classical theoretical results and the introduction of novel pseudospectra theory.
Due to many scholars have focused their attentions on the matrix perturbation analysis \cite{Bauer-Fike,1986Generalization,Rellich1969,shi2012sharp,Sun1987,trefethen2005spectra}, a wealth of results have been developed up to now. These include the Gershgorin disc theorem, the Bauer-Fike theorem,  and the Kahan theorem, as well as the development of pseudospectra theory for matrices. 
In this paper, we generalize these classical results and pseudospectra theory to the tensor case. Specifically, we present a Gershgorin disc-type theorem for tensors of size $m \times m \times n$, demonstrating that all T-eigenvalues lie within a union of Gershgorin discs (cf. Theorem \ref{GerThm}). Compared to similar results in \cite{cao2021tensor}, we obtain tighter bounds under certain conditions.
Moreover, we provide three generalizations of the Bauer-Fike theorem to the tensor case. The first generalization extends the classical Bauer-Fike theorem for matrices \cite{Bauer-Fike,Golub2013matrix} to F-diagonalizable tensors of size $m \times m \times n$ under different norms (cf. Theorem \ref{Bauer Ten}). Two additional generalizations are developed for $m \times m \times n$ tensors, which may not be F-diagonalizable (cf. Theorems \ref{Theorem Schur} and \ref{General two}). These can be viewed as generalizations of the Bauer-Fike theorems for non-diagonalizable matrices presented in \cite{1986Generalization,Golub2013matrix}.
Furthermore, we study the generalization of the Kahan theorem to the tensor case, considering general perturbations on Hermite tensors (cf. Theorem \ref{KahanThm}).
The second main contribution of this paper is the development of pseudospectra theory for third-order tensors. We present four different definitions of tensor $\varepsilon$-pseudospectra (cf. Definitions \ref{Definition pseudospectra} and \ref{Definition pseudospectra-singularValue}) and establish their equivalence under certain conditions. We also provide various pseudospectral properties.
Finally, we present visualizations and the application of the $\varepsilon$-pseudospectra of certain tensors through numerical examples.

The remaining sections of this paper are organized as follows. In Section \ref{Preliminaries}, we introduce some notations commonly used throughout the paper and review  basic concepts and fundamental results.
Section \ref{Perturbationanalysis section} is dedicated to the perturbation analysis of third-order tensors and represents one of the main parts of this study. We extend several classical theorems from the matrix domain to the tensor domain, offering insights into the perturbation behavior of tensors.
Section \ref{Pseudospectra sec} delves into the topic of $\varepsilon$-pseudospectral theory for tensors, which forms the second main part of this paper. We investigate various aspects of pseudospectra theory, exploring different definitions of tensor $\varepsilon$-pseudospectra and discussing their properties. Additionally, we complement our analysis by presenting visualizations that depict the $\varepsilon$-pseudospectra via some numerical examples.
Finally, in the concluding section, we summarize the key findings and contributions of this paper.

\section{Preliminaries}\label{Preliminaries}

We begin by presenting the notations that will be utilized throughout the paper. Subsequently, we provide a comprehensive review of fundamental tensor concepts, including the identity tensor, tensor transpose, F-diagonal tensor, orthogonal tensor, and others. These concepts are defined within the framework established by Kilmer \textit{et al.} \cite{kilmer2013third,Kilmer2011,Kilmer2008third}, which centers around the tensor-tensor multiplication operation.

In general, scalars are represented by lowercase letters, e.g., $a$. Vectors and matrices are denoted by boldface lowercase letters and capital letters, respectively, e.g., $\mathbf{v}$ and $A$. Euler script letters are utilized to represent higher-order tensors, such as $\mathcal{A}$. The frontal slices of a tensor $\mathcal{T} \in \mathbb{C}^{n_1\times n_2 \times n_3}$ are denoted by corresponding capital letters with subscripts, i.e., $T_1, \ldots, T_{n_3}$, which are matrices of size $n_1\times n_2$. For a matrix $A$, $A^{\top}$ (or $A^{\mathrm{H}}$) denotes its transpose (or conjugate transpose). 
The Kronecker product, denoted by `` $\otimes$'', is an operation performed on two matrices. Let $A=(a_{ij}) \in \mathbb{C}^{m\times n}$ and $B\in \mathbb{C}^{p\times q}$ be the two matrices. The Kronecker product yields a block matrix of size $(pm) \times (qn)$. That is, 
$$
A \otimes B=\left[\begin{array}{ccc}
	a_{11} B & \cdots & a_{1 n} B \\
	\vdots & \ddots & \vdots \\
	a_{m 1} B & \cdots & a_{m n} B
\end{array}\right].
$$
Throughout the paper, we frequently employ the $n\times n$ identity matrix, denoted as  $I_n$, as well as the $n\times n$ normalized discrete Fourier transform (DFT) matrix, denoted as $F_n$, for convenience.
The normalized  DFT matrix $F_n$  is defined as:
\[
F_n = \frac{1}{\sqrt{n}} \begin{bmatrix}
	1 & 1 & 1 & \ldots & 1 \\
	1 & \omega & \omega^2 & \ldots & \omega^{n-1} \\
	1 & \omega^2 & \omega^4 & \ldots & \omega^{2(n-1)} \\
	\vdots & \vdots & \vdots & \ddots & \vdots \\
	1 & \omega^{n-1} & \omega^{2(n-1)} & \ldots & \omega^{(n-1)(n-1)}
\end{bmatrix}
\]
where  $\omega = e^{-\frac{2\pi i}{n}}$ is the complex $n$th root of unity. The normalization factor $\frac{1}{\sqrt{n}}$ ensures that the DFT matrix is unitary.

Next, we revisit some fundamental concepts of tensors that are frequently used in the subsequent sections of this paper.

\begin{definition}{\rm (\cite[Definition 3.14]{Kilmer2011}, transposed tensor).}
	Let $\mathcal{A} \in \mathbb{C}^{n_1\times n_2 \times n_3}$.  Then the transposed tensor  $\mathcal{A}^{\top}  \in \mathbb{C}^{n_2\times n_1 \times n_3}$ or conjugate transposed tensor  $\mathcal{A}^{\mathrm{H}}  \in \mathbb{C}^{n_2\times n_1 \times n_3}$  is obtained by taking the  transpose or conjugate transpose  of each of the frontal slices and then reversing the order of transposed frontal slices $2$ through $n_3$. 
\end{definition}

Based on the aforementioned definition, a tensor $\mathcal{A}\in \mathbb{C}^{m\times m\times n}$ is said to be symmetric when $\mathcal{A} = \mathcal{A}^{\top}$, or Hermitian when $\mathcal{A} = \mathcal{A}^{\mathrm{H}}$ \cite{Jin2020}.

\begin{definition}{\rm (\cite[Definition 3.4]{Kilmer2011}, {\rm identity tensor}).}
	Let $\mathcal{E}_{mm\ell} \in \mathbb{C}^{m\times m \times \ell}$. If its frontal slice $E_1$ is the  identity
	matrix of size $m \times m$, and whose other frontal slices $E_2, \ldots, E_{\ell}$ are all zeros, then we call  $\mathcal{E}_{mm\ell}$ an identity tensor.
\end{definition}

\begin{definition}{\rm (\cite[Definition 3.5]{Kilmer2011}, {\rm inverse of a tensor}).}
	Let $\mathcal{A} \in \mathbb{C}^{m\times m \times \ell}$. The tensor $\mathcal{B} \in \mathbb{C}^{m\times m \times \ell}$ is referred to as the inverse of $\mathcal{A}$ if it fulfills the following two equations,
	$$
	\mathcal{A} * \mathcal{B}=\mathcal{E}_{mm\ell}, \quad \text { and } \quad \mathcal{B} * \mathcal{A}=\mathcal{E}_{mm\ell}.
	$$
	When such conditions hold, the tensor $\mathcal{A}$ is considered invertible, and its inverse is denoted as $\mathcal{A}^{-1}$.
\end{definition}

\begin{definition}{\rm (\cite[Definition 3.18]{Kilmer2011}, 	{\rm orthogonal  tensor and unitary tensor}).} 
	Let $\mathcal{Q} \in \mathbb{R}^{m\times m \times \ell}$. We call   $\mathcal{Q}$ an orthogonal tensor provided that
	$
	\mathcal{Q}^{\top} * \mathcal{Q}= \mathcal{Q} * \mathcal{Q}^{\top}  =   \mathcal{E}_{mm\ell}. 
	$
	If $\mathcal{Q} \in \mathbb{C}^{m\times m \times \ell}$ and $
	\mathcal{Q}^{\mathrm{H}}* \mathcal{Q}= \mathcal{Q} * \mathcal{Q}^{\mathrm{H}}  =   \mathcal{E}_{mm\ell}$, then we call it a unitary tensor.
\end{definition}

We call a third-order tensor $\mathcal{D}\in \mathbb{C}^{m\times m \times \ell}$ an F-diagonal tensor if all its frontal slices $D_1, \ldots, D_{\ell}$  are  diagonal matrices \cite{Kilmer2011}. 

\begin{definition}{\rm (\cite{kilmer2013third,Lund2020,Miao2020T}, F-diagonalizable tensor). }
	Let  $\mathcal{A}\in \mathbb{C}^{m\times m \times \ell}$. If there exists an invertible tensor $\mathcal{P}$ and an F-diagonal tensor $\mathcal{D}$ such that 
	$$
	\mathcal{A}=\mathcal{P} * \mathcal{D} * \mathcal{P}^{-1},
	$$
	then we refer to $\mathcal{A}$ as an F-diagonalizable tensor.
\end{definition}

\begin{definition}{\rm (\cite[Definition 6]{zheng2020t}, T-positive (semi)definite tensor). }
	Let  $\mathcal{A}\in \mathbb{R}^{m\times m \times \ell}$. A tensor $\mathcal{A}$ is called a symmetric T-positive (semi)definite 	tensor if and only if it is  symmetric and 
	$$
	\langle\mathcal{X}, \mathcal{A} * \mathcal{X}\rangle>(\geq) 0,
	$$
	where $\langle\mathcal{A}, \mathcal{B}\rangle:=\sum_{i, j, k} a_{i j k} b_{i j k}$,
holds	for any nonzero $\mathcal{X} \in \mathbb{R}^{m\times 1 \times \ell}$ (for any  $\mathcal{X} \in \mathbb{R}^{m\times 1 \times \ell}$). 
\end{definition}

It has been shown that a symmetric third-order tensor $\mathcal{A}$ is T-positive (semi)definite  if and only if each T-eigenvalue of $\mathcal{A}$ is positive (nonnegative) \cite{zheng2020t}.

Several useful lemmas are summarized as follows.

\begin{lemma}\label{Lemmabcirc} {\rm (\cite{Jin2020,Lund2020,Miao2020generalized}). }
	The following results hold for third-order tensors $\mathcal{A} \in \mathbb{C}^{m \times n \times p}$:
	
	{\rm (a)} The operator  \text{bcirc}($\cdot$) defined in  (\ref{Definition bcirc})
	is a linear operator, i.e., 
\begin{equation*}
	\operatorname{bcirc}(\alpha \mathcal{A}+\beta \mathcal{B})=\alpha \operatorname{bcirc}(\mathcal{A})+\beta \operatorname{bcirc}(\mathcal{B}),
\end{equation*}
	where $\mathcal{B}$ has the same size as $\mathcal{A}$  and $\alpha, \beta$ are constants.
	
	{\rm (b)}   $\operatorname{bcirc}(\mathcal{A} * \mathcal{C})=\operatorname{bcirc}(\mathcal{A}) \operatorname{bcirc}(\mathcal{C})$  where $\mathcal{C} \in \mathbb{C}^{n \times s \times p}$. 
	
	{\rm (c)}  $\operatorname{bcirc}\left(\mathcal{A}^{\top}\right)=(\operatorname{bcirc}(\mathcal{A}))^{\top}$,  and $\operatorname{bcirc}\left(\mathcal{A}^{\mathrm{H}}\right)=(\operatorname{bcirc}(\mathcal{A}))^{\mathrm{H}}$.
	
	{\rm (d)}   The inverse  of an invertible $\mathcal{A}$ is unique and $
	\mathrm{bcirc}\left(\mathcal{A}^{-1}\right)=( \operatorname{bcirc}(\mathcal{A}))^{-1}.
	$
\end{lemma}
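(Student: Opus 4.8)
The plan is to dispatch the four assertions in order of increasing difficulty: (a) and (c) by direct inspection of the block pattern, (b) by exploiting the multiplicative closure of block circulant matrices, and (d) as a consequence of (b) together with associativity of $*$. Throughout I record the bookkeeping fact that the $(i,j)$ block of $\operatorname{bcirc}(\mathcal{A})$ is the frontal slice $A_{((i-j)\bmod p)+1}$, so that a block circulant matrix is completely determined by its first block column.

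For (a), the $(i,j)$ block of $\operatorname{bcirc}(\alpha\mathcal{A}+\beta\mathcal{B})$ is $\alpha A_{((i-j)\bmod p)+1}+\beta B_{((i-j)\bmod p)+1}$, which is the sum of the corresponding blocks of $\alpha\operatorname{bcirc}(\mathcal{A})$ and $\beta\operatorname{bcirc}(\mathcal{B})$; since this holds blockwise, the identity follows. For (c), the $(i,j)$ block of $(\operatorname{bcirc}(\mathcal{A}))^{\top}$ is $\bigl(A_{((j-i)\bmod p)+1}\bigr)^{\top}$, while the definition of $\mathcal{A}^{\top}$ (transpose each slice and reverse slices $2,\dots,p$) makes the $(i,j)$ block of $\operatorname{bcirc}(\mathcal{A}^{\top})$ equal to the same expression; a short index check confirms the match, and the Hermitian case is identical with conjugation inserted.

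The heart of the lemma, and the step I expect to be the main obstacle, is (b). The key fact I would establish is that the product of two block circulant matrices (with the same block dimensions) is again block circulant; this can be verified either by checking directly that the $(i,j)$ block of the product depends only on $(i-j)\bmod p$, or, more conceptually, by recalling that all such matrices are simultaneously block-diagonalized by the discrete Fourier transform, so that multiplication becomes a blockwise product in the Fourier domain and hence preserves the block circulant form. Granting this, the first block column of $\operatorname{bcirc}(\mathcal{A})\operatorname{bcirc}(\mathcal{B})$ equals $\operatorname{bcirc}(\mathcal{A})$ times the first block column of $\operatorname{bcirc}(\mathcal{B})$, namely $\operatorname{bcirc}(\mathcal{A})\operatorname{unfold}(\mathcal{B})=\operatorname{unfold}(\mathcal{A}*\mathcal{B})$ by the definition \eqref{tensor-tensor multiplication}. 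Since $\operatorname{bcirc}(\mathcal{A}*\mathcal{B})$ has first block column $\operatorname{unfold}(\mathcal{A}*\mathcal{B})$ by construction, and a block circulant matrix is determined by its first block column, the two matrices coincide.

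Finally, for (d) I would first note that $\operatorname{bcirc}(\mathcal{I}_{mmp})$ is the $(mp)\times(mp)$ identity matrix, because the identity tensor has identity first frontal slice and zero remaining slices. Applying (b) to $\mathcal{A}*\mathcal{A}^{-1}=\mathcal{A}^{-1}*\mathcal{A}=\mathcal{I}_{mmp}$ yields $\operatorname{bcirc}(\mathcal{A})\operatorname{bcirc}(\mathcal{A}^{-1})=\operatorname{bcirc}(\mathcal{A}^{-1})\operatorname{bcirc}(\mathcal{A})=I$, whence $\operatorname{bcirc}(\mathcal{A}^{-1})=(\operatorname{bcirc}(\mathcal{A}))^{-1}$. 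Uniqueness of the inverse is the standard argument: associativity of $*$ (itself inherited from matrix multiplication through (b)) gives, for any two inverses $\mathcal{B},\mathcal{C}$, that $\mathcal{B}=\mathcal{B}*\mathcal{I}_{mmp}=\mathcal{B}*(\mathcal{A}*\mathcal{C})=(\mathcal{B}*\mathcal{A})*\mathcal{C}=\mathcal{I}_{mmp}*\mathcal{C}=\mathcal{C}$.
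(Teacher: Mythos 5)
Your proposal is correct and complete. Note that the paper itself offers no proof of this lemma --- it is quoted with citations to the references --- so there is nothing to compare against line by line; your argument (blockwise verification for (a) and (c), closure of block circulants under multiplication plus the first-block-column identification $\operatorname{bcirc}(\mathcal{A})\operatorname{unfold}(\mathcal{B})=\operatorname{unfold}(\mathcal{A}*\mathcal{B})$ for (b), and (b) applied to $\mathcal{A}*\mathcal{A}^{-1}=\mathcal{I}$ for (d)) is precisely the standard one in the cited sources, and every step checks out, including the index bookkeeping $A_{((i-j)\bmod p)+1}$ and the slice-reversal match in (c). The only cosmetic slip is saying the two block circulant factors have ``the same block dimensions'' in (b); they need only be conformable ($m\times n$ blocks times $n\times s$ blocks on a $p\times p$ block grid), which is what your argument actually uses.
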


\begin{lemma}{\rm (\cite[Theorems 2.7 and 2.8]{Jin2020}).} \label{Properties of symmetric tensor}
	Let  $\mathcal{A}\in \mathbb{C}^{m\times m\times n}$.  Some fundamental results involving Hermitian or symmetric tensors are:
	
	{\rm (a)} The tensor $\mathcal{A}$ is symmetric if and only if $\operatorname{bcirc}\left(\mathcal{A}\right)=(\operatorname{bcirc}(\mathcal{A}))^{\top}$. 
	
	{\rm (b)}  The tensor $\mathcal{A}$ is Hermitian if and only if $\operatorname{bcirc}\left(\mathcal{A}\right)=(\operatorname{bcirc}(\mathcal{A}))^{\mathrm{H}}$. 
	
	{\rm (c)} All T-eigenvalues (cf. Definition \ref{def3-1}) of a Hermitian tensor $\mathcal{A}$ are real.
\end{lemma}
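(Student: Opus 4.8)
The plan is to read off parts (a) and (b) from the algebraic identities already collected in \ref{Lemmabcirc}, and to reduce part (c) to the standard diagonalization of circulant structure by the discrete Fourier transform.

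First I would dispose of (a) and (b) simultaneously. The key observation is that $\operatorname{bcirc}$ is injective: the frontal slices $A_1,\dots,A_n$ reappear unchanged as the first block column of $\operatorname{bcirc}(\mathcal{A})$, so $\mathcal{A}$ is completely recoverable from $\operatorname{bcirc}(\mathcal{A})$, and therefore $\operatorname{bcirc}(\mathcal{X})=\operatorname{bcirc}(\mathcal{Y})$ holds precisely when $\mathcal{X}=\mathcal{Y}$. Combining this with the identity $\operatorname{bcirc}(\mathcal{A}^{\top})=(\operatorname{bcirc}(\mathcal{A}))^{\top}$ from \ref{Lemmabcirc}(c) gives the equivalences
\[
\mathcal{A}=\mathcal{A}^{\top}\ \Longleftrightarrow\ \operatorname{bcirc}(\mathcal{A})=\operatorname{bcirc}(\mathcal{A}^{\top})=(\operatorname{bcirc}(\mathcal{A}))^{\top},
\]
which is exactly statement (a). Repeating the argument with the conjugate transpose, using the second identity $\operatorname{bcirc}(\mathcal{A}^{\mathrm{H}})=(\operatorname{bcirc}(\mathcal{A}))^{\mathrm{H}}$ of \ref{Lemmabcirc}(c), yields (b). No real difficulty appears here beyond noticing the injectivity of $\operatorname{bcirc}$.

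For part (c) I would invoke the fact that any block circulant matrix is block-diagonalized by the suitably normalized Fourier matrix. Writing $F_n$ for the unitary $n\times n$ discrete Fourier transform matrix, the similarity $(F_n\otimes I_m)\,\operatorname{bcirc}(\mathcal{A})\,(F_n\otimes I_m)^{\mathrm{H}}$ is block diagonal, with diagonal blocks $\widehat{A}_1,\dots,\widehat{A}_n$ equal to the frontal slices of the mode-$3$ Fourier transform of $\mathcal{A}$; by construction the $T$-eigenvalues of $\mathcal{A}$ are precisely the eigenvalues of these blocks, equivalently the spectrum of $\operatorname{bcirc}(\mathcal{A})$. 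If $\mathcal{A}$ is Hermitian, then part (b) makes $\operatorname{bcirc}(\mathcal{A})$ a Hermitian matrix, and since a unitary similarity preserves Hermitian symmetry, the block-diagonal matrix---hence each block $\widehat{A}_i$---is Hermitian as well. The eigenvalues of a Hermitian matrix are real, so every $T$-eigenvalue of $\mathcal{A}$ is real.

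The main obstacle, such as it is, lies entirely in part (c): one must match the definition of the $T$-eigenvalue to the eigenvalues of the Fourier-domain blocks and keep the normalization of $F_n$ correct, so that $F_n\otimes I_m$ is genuinely unitary and thus transports the Hermitian property faithfully. Once the circulant-diagonalization identity is granted, the conclusion is immediate; the Hermitian-preservation and real-spectrum steps are routine linear algebra, and the whole of (a)--(b) is purely formal manipulation of the identities in \ref{Lemmabcirc}.
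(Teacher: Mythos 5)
The paper does not prove this lemma at all---it is quoted from \cite{Jin2020} (Theorems 2.7 and 2.8)---so there is no internal proof to compare against; judged on its own, your argument is correct and complete. Parts (a) and (b) go through exactly as you describe: $\operatorname{bcirc}$ is injective because $\operatorname{unfold}(\mathcal{A})$ appears as the first block column of $\operatorname{bcirc}(\mathcal{A})$, and combining that with the identities $\operatorname{bcirc}(\mathcal{A}^{\top})=(\operatorname{bcirc}(\mathcal{A}))^{\top}$ and $\operatorname{bcirc}(\mathcal{A}^{\mathrm{H}})=(\operatorname{bcirc}(\mathcal{A}))^{\mathrm{H}}$ of \ref{Lemmabcirc}(c) yields both equivalences. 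For part (c) your Fourier detour is valid but heavier than necessary: the remark following \ref{Definition GenTE} already identifies the $T$-eigenvalues of $\mathcal{A}$ with the eigenvalues of $\operatorname{bcirc}(\mathcal{A})$, so part (b) makes $\operatorname{bcirc}(\mathcal{A})$ a Hermitian matrix and the realness of its spectrum is immediate, without ever forming the Fourier-domain blocks. If you do keep the block-diagonalization step, your normalization is the right one to insist on: for $\mathcal{A}\in\mathbb{C}^{m\times m\times n}$ the block-circulant structure is $n\times n$ in blocks of size $m\times m$, so the unitary conjugating matrix is $F_n\otimes I_m$ of size $mn$ (the paper's own displays sometimes write $F_m\otimes I_n$, which is dimensionally inconsistent), and unitarity is exactly what transports the Hermitian property to each block $\widehat{A}_i$.
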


\begin{lemma}{\rm (\cite[Lemma 4]{Miao2020T}).}\label{lemmaFdiagonal iff}
	Suppose $A_{1}, \cdots, A_{p}, B_{1},  \cdots, B_{p} \in \mathbb{C}^{n \times n}$ are  matrices satisfying
	$$
	\left[\begin{array}{cccc}
		A_1 & A_p &  \cdots & A_{2} \\
		A_{2} & A_{1}  & \cdots & A_{3} \\ \vdots & \vdots &\ddots & \vdots \\ 
		A_{p} & A_{p-1} &  \cdots  & A_{1}
	\end{array}\right]
	=
	\left(F_{p} \otimes I_{n}\right)
	\left[\begin{array}{cccc}
		B_{1} & & & \\ 
		& B_{2} & & \\ 
		& & \ddots & \\ 
		& & & B_{p}
	\end{array}\right]\left(F_{p}^{\mathrm{H}} \otimes I_{n}\right).
	$$
	Then $B_{1},  \cdots, B_{p}$ are diagonal (sub-diagonal, upper-triangular, lower-triangular) matrices if and only if $A_{1},  \cdots, A_{p}$ are diagonal (sub-diagonal, upper-triangular, lower-triangular) matrices.
\end{lemma}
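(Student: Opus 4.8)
The plan is to reduce the entire statement to a single entrywise observation about the discrete Fourier transform. First I would make explicit that the displayed identity is nothing but the classical block-diagonalization of a block circulant matrix by the Fourier matrix. Writing the cyclic shift permutation $P \in \mathbb{C}^{p \times p}$ (with $(P)_{ij} = 1$ exactly when $i - j \equiv 1 \pmod p$), the block circulant matrix on the left is $\operatorname{bcirc}(\mathcal{A}) = \sum_{k=1}^{p} P^{\,k-1} \otimes A_k$. Since $F_p$ diagonalizes $P$, i.e. $F_p^{\mathrm{H}} P F_p$ is a diagonal matrix whose entries are the $p$-th roots of unity, conjugating by $F_p \otimes I_n$ and using $(F_p^{\mathrm{H}} \otimes I_n)(P^{k-1}\otimes A_k)(F_p \otimes I_n) = (F_p^{\mathrm{H}}P^{k-1}F_p)\otimes A_k$ shows that the right-hand side is block diagonal with blocks
$$
B_j = \sum_{k=1}^{p} \omega^{(j-1)(k-1)} A_k, \qquad j = 1, \ldots, p,
$$
for a primitive $p$-th root of unity $\omega$ fixed by the Fourier convention.

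Next I would read this relation off entrywise. Fixing any position $(s,t)$ with $1 \le s,t \le n$, the $(s,t)$ entries satisfy $(B_j)_{st} = \sum_{k=1}^{p} \omega^{(j-1)(k-1)} (A_k)_{st}$; that is, the vector $\big((B_1)_{st}, \ldots, (B_p)_{st}\big)$ is the image of $\big((A_1)_{st}, \ldots, (A_p)_{st}\big)$ under a Vandermonde (scaled DFT) matrix at distinct roots of unity, which is invertible. Consequently, for each fixed $(s,t)$ one has $(B_j)_{st} = 0$ for all $j$ if and only if $(A_k)_{st} = 0$ for all $k$. This is the crux: the Fourier relation couples the slices only through the block index, while the $\otimes I_n$ factor leaves the within-slice index $(s,t)$ untouched, so vanishing of a fixed entry is transported in both directions.

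Finally I would observe that each of the four structural classes is defined by a fixed set $S$ of forbidden positions: $S = \{(s,t) : s \neq t\}$ for diagonal, $S = \{(s,t) : s > t\}$ for upper-triangular, $S = \{(s,t) : s < t\}$ for lower-triangular, and the analogous set for sub-diagonal. By definition, all of $B_1, \ldots, B_p$ belong to a given class precisely when $(B_j)_{st} = 0$ for every $j$ and every $(s,t) \in S$, and likewise for the $A_k$. Applying the entrywise equivalence of the previous paragraph at each $(s,t) \in S$ yields the desired biconditional, and because the underlying DFT map is a bijection both implications follow simultaneously.

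The main obstacle is the first step: correctly setting up the Kronecker-product bookkeeping so that conjugation by $F_p \otimes I_n$ collapses to a transform acting purely on the slice index $k \leftrightarrow j$. Once the explicit formula $B_j = \sum_k \omega^{(j-1)(k-1)} A_k$ (equivalently, its invertibility) is in hand, the invariance of each zero-pattern class under an invertible entrywise transform makes the remainder essentially bookkeeping.
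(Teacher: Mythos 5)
The paper does not prove this lemma; it is quoted verbatim from Miao et al.\ (cited as Lemma 4 of that reference) and used as a black box, so there is no in-paper argument to compare against. Your proof is correct and self-contained: writing $\operatorname{bcirc}(\mathcal{A})=\sum_{k=1}^{p}P^{k-1}\otimes A_{k}$ for the cyclic shift $P$, applying the mixed-product property of the Kronecker product, and using that $F_{p}$ diagonalizes $P$ gives exactly $B_{j}=\sum_{k}\lambda_{j}^{\,k-1}A_{k}$ with $\lambda_{1},\dots,\lambda_{p}$ the distinct $p$-th roots of unity; the entrywise reduction then turns the claim into the invertibility of the Vandermonde matrix $\bigl(\lambda_{j}^{\,k-1}\bigr)_{j,k}$, and each of the four structural classes is indeed characterized by a common zero pattern, so the biconditional follows in both directions at once. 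The only point worth making explicit in a final write-up is the orientation of the conjugation (the lemma states $\operatorname{bcirc}(\mathcal{A})=(F_{p}\otimes I_{n})\,\mathrm{diag}(B_{j})\,(F_{p}^{\mathrm{H}}\otimes I_{n})$, so $B_{j}$ is obtained by conjugating with $F_{p}^{\mathrm{H}}\otimes I_{n}$ on the left), but since either convention yields a diagonal matrix of distinct roots of unity, this does not affect the argument.
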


A tensor $\mathcal{A}\in \mathbb{C}^{m\times m \times n}$ is  normal if it satisfies the condition $\mathcal{A}*\mathcal{A}^{\mathrm{H}} = \mathcal{A}^{\mathrm{H}} * \mathcal{A}$, indicating that $\mathcal{A}$ commutes with its conjugate transpose under tensor-tensor multiplication, as stated in \cite{Miao2020T}. It is evident that symmetric and Hermitian tensors are examples of normal tensors.

As stated in the subsequent conclusion, any normal third-order tensor can be F-diagonalizable by means of a unitary tensor.
\begin{lemma}{\rm (\cite{Miao2020T}).}\label{normal diagonal}
	For a given normal tensor $\mathcal{A}\in \mathbb{C}^{m\times m \times n}$, there exists a unitary tensor $\mathcal{U}\in \mathbb{C}^{m\times m \times n}$ such that
	$$
	\mathcal{A} = \mathcal{U} * \mathcal{D} * \mathcal{U}^{\mathrm{H}},
	$$
	where $\mathcal{D}$ is an F-diagonal tensor.
	
\end{lemma}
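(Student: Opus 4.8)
The plan is to transfer the whole problem to the block circulant matrix $\operatorname{bcirc}(\mathcal{A})$, using that $\operatorname{bcirc}$ is an injective algebra homomorphism (\ref{Lemmabcirc}) and that block circulant matrices are simultaneously block-diagonalized by the discrete Fourier transform (\ref{lemmaFdiagonal iff}). First I would apply $\operatorname{bcirc}$ to the normality identity $\mathcal{A} * \mathcal{A}^{\mathrm{H}} = \mathcal{A}^{\mathrm{H}} * \mathcal{A}$. By parts (b) and (c) of \ref{Lemmabcirc} this yields $\operatorname{bcirc}(\mathcal{A})\operatorname{bcirc}(\mathcal{A})^{\mathrm{H}} = \operatorname{bcirc}(\mathcal{A})^{\mathrm{H}}\operatorname{bcirc}(\mathcal{A})$, so $M := \operatorname{bcirc}(\mathcal{A})$ is a normal matrix in the ordinary sense.

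By \ref{lemmaFdiagonal iff} there is a unitary $W = F_n \otimes I_m$ with $M = W\widehat{A}W^{\mathrm{H}}$, where $\widehat{A} = \operatorname{blockdiag}(\widehat{A}_1,\ldots,\widehat{A}_n)$ collects the Fourier-domain faces $\widehat{A}_i \in \mathbb{C}^{m\times m}$. Since $W$ is unitary, $\widehat{A} = W^{\mathrm{H}} M W$ is again normal; and because a block-diagonal matrix is normal if and only if each of its diagonal blocks is normal, every $\widehat{A}_i$ is a normal matrix. The classical spectral theorem then supplies, for each $i$, a unitary $\widehat{U}_i$ and a diagonal $\widehat{D}_i$ with $\widehat{U}_i^{\mathrm{H}}\widehat{A}_i\widehat{U}_i = \widehat{D}_i$.

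Next I would assemble $\widehat{U} = \operatorname{blockdiag}(\widehat{U}_1,\ldots,\widehat{U}_n)$ and $\widehat{D} = \operatorname{blockdiag}(\widehat{D}_1,\ldots,\widehat{D}_n)$, so that $\widehat{U}^{\mathrm{H}}\widehat{A}\widehat{U} = \widehat{D}$, and transport these back to the spatial domain by setting $U = W\widehat{U}W^{\mathrm{H}}$ and $D = W\widehat{D}W^{\mathrm{H}}$. Here lies the decisive point, which I expect to be the main obstacle: the diagonalizing matrix must itself be block circulant, so that it equals $\operatorname{bcirc}(\mathcal{U})$ for an honest tensor $\mathcal{U}$ — diagonalizing $M$ by an arbitrary unitary would obliterate this structure. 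This is exactly what the blockwise diagonalization in the Fourier domain secures: because $\widehat{U}$ is block-diagonal, \ref{lemmaFdiagonal iff} read from right to left shows $U$ is block circulant, say $U = \operatorname{bcirc}(\mathcal{U})$, and because each $\widehat{D}_i$ is diagonal the same lemma shows $D = \operatorname{bcirc}(\mathcal{D})$ with $\mathcal{D}$ an $F$-diagonal tensor.

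Finally I would verify unitarity of $\mathcal{U}$ and conclude. From $U^{\mathrm{H}} U = W\widehat{U}^{\mathrm{H}}\widehat{U}W^{\mathrm{H}} = I$ and $\operatorname{bcirc}(\mathcal{U}^{\mathrm{H}} * \mathcal{U}) = U^{\mathrm{H}} U = \operatorname{bcirc}(\mathcal{I}_{mmn})$, the injectivity of $\operatorname{bcirc}$ (its frontal slices are recovered from the first block column) gives $\mathcal{U}^{\mathrm{H}} * \mathcal{U} = \mathcal{I}_{mmn}$, and symmetrically $\mathcal{U} * \mathcal{U}^{\mathrm{H}} = \mathcal{I}_{mmn}$, so $\mathcal{U}$ is unitary. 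Computing $U^{\mathrm{H}} M U = W\widehat{U}^{\mathrm{H}}\widehat{A}\widehat{U}W^{\mathrm{H}} = W\widehat{D}W^{\mathrm{H}} = D$ and translating through \ref{Lemmabcirc}(b),(c) yields $\operatorname{bcirc}(\mathcal{U}^{\mathrm{H}} * \mathcal{A} * \mathcal{U}) = \operatorname{bcirc}(\mathcal{D})$; injectivity of $\operatorname{bcirc}$ once more gives $\mathcal{U}^{\mathrm{H}} * \mathcal{A} * \mathcal{U} = \mathcal{D}$, which is precisely the claimed identity after relabeling the unitary tensor $\mathcal{U}$ by $\mathcal{U}^{\mathrm{H}}$ to match the stated orientation.
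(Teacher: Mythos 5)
The paper does not prove this lemma; it is quoted from the reference \cite{Miao2020T} without argument, so there is no in-paper proof to compare against. Your proposal is a correct and complete proof, and it follows the natural (and presumably the cited source's) route: push normality through $\operatorname{bcirc}$ via \ref{Lemmabcirc}(b),(c), block-diagonalize the resulting normal block circulant matrix with $F_n\otimes I_m$, apply the spectral theorem blockwise, and pull the block-diagonal unitary and diagonal factors back to tensors. The only point worth tightening is your appeal to \ref{lemmaFdiagonal iff} ``read from right to left'' to conclude that $U=W\widehat{U}W^{\mathrm{H}}$ is block circulant: as stated, that lemma presupposes the block circulant form on the left and only transfers structural properties between the $A_i$ and the $B_i$. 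What you actually need is the standard fact that conjugation by $F_n\otimes I_m$ is a linear bijection between the space of block circulant matrices with $m\times m$ blocks and the space of block diagonal matrices with $m\times m$ blocks (both of dimension $nm^2$, with the forward inclusion already known), so every $W(\text{block diagonal})W^{\mathrm{H}}$ is block circulant and hence equals $\operatorname{bcirc}(\mathcal{U})$ for a unique tensor $\mathcal{U}$. With that one sentence supplied, the rest of your argument --- normality of each Fourier face, unitarity of $\mathcal{U}$ via injectivity of $\operatorname{bcirc}$, and the final relabeling $\mathcal{U}\mapsto\mathcal{U}^{\mathrm{H}}$ --- is airtight.
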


\section{Perturbation Analysis on Third-Order Tensors} \label{Perturbationanalysis section}

Within the framework of tensor-tensor multiplication (\ref{tensor-tensor multiplication}) proposed and investigated by Kilmer and Martin \cite{Kilmer2011}, T-eigenvalues and T-eigenvectors have garnered significant attention from researchers. They offer a novel perspective to characterize the properties of the widely employed tensor-tensor multiplication (\ref{tensor-tensor multiplication}). Extensive studies from diverse viewpoints can be found in the references \cite{braman2010thirdorder,Kilmer2013SIAM,Jin2020,Miao2020T,zheng2020t}. Before proceeding with our main results, it is essential to revisit the fundamental concept of T-eigenvalues and T-eigenvectors. Subsequently, we further elaborate on this concept by extending it to a more general case, known as generalized T-eigenvalues and generalized T-eigenvectors. At last,  the connection between T-eigenvalues and certain optimization problems is presented.

\begin{definition}{\rm (\cite[Definition 2.5]{Jin2020}).}\label{def3-1}
	Suppose that $\mathcal{A}$ is a tensor with size $m \times m \times \ell$. If there exists a nonzero $m \times 1 \times \ell$ tensor $\mathcal{X}$ and scalar $\lambda$ such that
	\begin{equation}\label{def-T-eig}
		\mathcal{A} * \mathcal{X}  = \lambda  \mathcal{X}, 
	\end{equation}
	then $\lambda$ is called a T-eigenvalue of the third-order tensor $\mathcal{A}$ and $\mathcal{X}$ is a T-eigenvector of $\mathcal{A}$ associated to $\lambda$.
\end{definition}

\begin{remark}
	We can further extend the concept of T-eigenvalues into  generalized T-eigenvalues, similar to the case of generalized matrix eigenvalues. Let $\mathcal{B}$ be another tensor with the same size as $\mathcal{A}$. Under the same conditions as defined in Definition \ref{def3-1}, if the following equation holds:
	\begin{equation}\label{Gene eig}
		\mathcal{A} * \mathcal{X} = \lambda (\mathcal{B} * \mathcal{X}) ,
	\end{equation}
	then $\lambda$ is referred to as a generalized T-eigenvalue of $\mathcal{A}$ with respect to $\mathcal{B}$. This generalization encompasses the cases presented in prior works \cite{braman2010thirdorder,Kilmer2013SIAM,Jin2020,Miao2020T,zheng2020t}.
	
	Preliminary investigations into generalized T-eigenvalues \eqref{Gene eig} are conducted in the following analysis. Utilizing the definition of tensor-tensor multiplication as provided in (\ref{tensor-tensor multiplication}) and note that $ ( \lambda\mathcal{B} * \mathcal{X})= \lambda (\mathcal{B} * \mathcal{X})$ as well as $\lambda  \operatorname{bcirc}(\mathcal{B}) =  \operatorname{bcirc}( \lambda\mathcal{B})$, it can be observed that the equation (\ref{Gene eig}) is equivalent to
	\begin{equation}\label{Gen to matrix}
		\operatorname{bcirc}(\mathcal{A}) \operatorname{unfold}(\mathcal{X})=\lambda ( \operatorname{bcirc}(\mathcal{B}) \operatorname{unfold}(\mathcal{X})).
	\end{equation}
	It is important to note that $\operatorname{unfold}(\mathcal{X})$ represents a vector with dimensions $m\ell$. Consequently, the generalized T-eigenvalue problem (\ref{Gene eig}), based on tensor-tensor multiplication, exhibits a strong correlation with the classical generalized matrix eigenvalue problem \cite{Horn2013}. Accordingly, there exist $m\ell$ eigenvalues, accounting for multiplicities, for the problem (\ref{Gene eig}) if and only if
	$ \operatorname{rank}(\operatorname{bcirc}(\mathcal{B})) = m\ell.
	$
	In cases where the block circulant matrix $\operatorname{bcirc}(\mathcal{B})$ is deficient in rank, the set of generalized T-eigenvalues of $\mathcal{A}$ relative to $\mathcal{B}$ may be finite, empty, or infinite.
\end{remark}

The T-eigenvalue problem can be formulated based on the well-known eigenvalue problems associated with block circulant matrices, which finds wide applications \cite{Davis1979circulant}, and has a close relationship with optimization problems.  For example, the paper  \cite{Olson2014circulant} offers a concise tutorial on circulant matrices, exploring their application in modeling and analyzing the free and forced vibration of mechanically cyclic symmetrical structures.
Several example applications are provided, including the  eigenvalue problems with the system matrices being block circulant with square blocks.  Suppose $C$ is a $(m\ell) \times (m\ell)$ block circulat matrix with generating matrices $\{C_1, C_2, \cdots, C_\ell\}$ in which $C_i$ is of size $m\times m$ for $i=1,2,\cdots, \ell$, that is,  
\begin{equation*}
	C=\left[\begin{array}{cccc}
		C_1 & C_\ell & \cdots & C_2 \\
		C_2 & C_1 & \cdots & C_{3} \\
		\vdots & \vdots & \ddots & \vdots \\
		C_\ell & C_{\ell-1} & \cdots & C_1
	\end{array}\right],
\end{equation*}
and we consider the  following optimization problem, under the condition $C$ is symmetric (which yields to a symmetric tensor by reformulating),  with the variable $\mathbf{y} \in \mathbb{R}^{m\ell}$: 
\begin{equation} \label{opti-t1}
	\max _{\mathbf{y}} \quad     \mathbf{y}^{\top} C \mathbf{y}
	 \quad \text { s.t. } \quad 
	\mathbf{y}^{\top} \mathbf{y}=1 ,
\end{equation}
or the problem related to the maximum of the Rayleigh quotient, 
\begin{equation} \label{opti-t2}
	\max _{\mathbf{y}} \quad   \frac{\mathbf{y}^{\top} C \mathbf{y}}{\mathbf{y}^{\top}  \mathbf{y}}.  
\end{equation}
By using the method of Lagrange multipliers, we get the Lagrangian with Lagrange multiplier $\lambda \in \mathbb{R}$,
$
\mathfrak{L} =  \mathbf{y}^{\top} C \mathbf{y} - \lambda (\mathbf{y}^{\top} \mathbf{y}-1),
$
and letting the derivative of Lagrangian to zero give us $\mathbb{R}^{ml}\ni \frac{\partial \mathfrak{L}}{\partial \mathbf{y}} = 2C\mathbf{y} - 2\lambda \mathbf{y} = 0$, and thus
 $C\mathbf{y} = \lambda \mathbf{y}$. Note that $\mathbf{y} = \{y_1, \cdots, y_m, y_{m+1},\cdots, y_{2m}, \cdots, y_{m\ell}\}^{\top}$ and $C$ has a block circulant  structure, by resizing the vector $\mathbf{y}$ into a matrix 
\begin{equation*}
Y=\left[\begin{array}{cccc}
		y_1 & y_{m+1} & \cdots & y_{(\ell-1)m+1} \\
		y_2 & y_{m+2} & \cdots &  y_{(\ell-1)m+2} \\
		\vdots & \vdots & \ddots & \vdots \\
		y_m & y_{2m} & \cdots &  y_{\ell m}
	\end{array}\right] \in \mathbb{R}^{m\times \ell},
\end{equation*}
which can also be considered as a tensor $\mathcal{X}$ with size $m\times 1 \times \ell$, and also generating a tensor $\mathcal{A}$, which would be symmetric under the condition $C$ is symmetric, with the generating matrices $\{C_1, C_2, \cdots, C_\ell\}$ as its frontal slices in order, then we can get an equivalent T-eigenvalue form as \eqref{def-T-eig}, which illustrates that  the optimization problems \eqref{opti-t1} and \eqref{opti-t2} involving with block circulant  system matrices  employed in  vibration analysis \cite{Davis1979circulant,Olson2014circulant}  can  yield to the T-eigenvalue problem with symmetric tensor $\mathcal{A}$ defined in Definition \ref{def-T-eig}.

This section next  focuses on the perturbation analysis of T-eigenvalues, which represents a less complex scenario compared to \eqref{Gene eig} due to the assurance that the set of T-eigenvalues is neither empty nor infinite. By considering the identity tensor $\mathcal{E}$ as $\mathcal{B}$ in \eqref{Gen to matrix}, we can derive an equivalent form of \eqref{def-T-eig} as follows:
$
\operatorname{bcirc}(\mathcal{A}) \operatorname{unfold}(\mathcal{X})=\lambda \operatorname{unfold}(\mathcal{X}).
$
This implies that all T-eigenvalues of tensor $\mathcal{A}$ are, in fact, eigenvalues of the block circulant matrix $\operatorname{bcirc}(\mathcal{A})$, and vice versa. Notably, in the study of matrix perturbation theory, the Gershgorin disc, Bauer-Fike, and Kahan theorems \cite{Bauer-Fike,1986Generalization,Golub2013matrix,shi2012sharp,stewartsun1990matrix,Sun1987} hold significant importance as fundamental results in analyzing the sensitivity of eigenvalues of matrices to perturbations. Drawing on the foundations laid above, we extend these theorems to the tensor case.

\subsection{Gershgorin Disc Theorem for Tensors}

The Gershgorin disc theorem constitutes a fundamental outcome for bounding the spectra of square matrices \cite{Golub2013matrix,Horn2013}. Within this subsection, we aim to broaden the applicability of this theorem by extending it to encompass the domain of third-order tensors.

Consider the tensor $\mathcal{A}$ belonging to the complex space $\mathbb{C}^{m \times m \times \ell}$.  Through the utilization of the normalized DFT matrix, the matrix $\operatorname{bcirc}(\mathcal{A})$ can be block-diagonalized \cite{Kilmer2011}.   However, it is important to note that the resulting block-diagonal matrix may not necessarily be diagonalizable. As an illustrative example,  the matrix $\operatorname{bcirc}(\mathcal{A})$ constructed from tensor $\mathcal{A}$ with the following three frontal slices is not diagonalizable:
$$
A_1 = \left(
\begin{array}{ccc}
	1 & 1 & 0 \\
	0 & 1 & 0 \\
	0 & 0 & 1 \\
\end{array}
\right), A_2 =\left(
\begin{array}{ccc}
	1 & 1 & 1 \\
	0 & 1 & 0 \\
	0 & 0 & 1 \\
\end{array}
\right), A_3=
\left(
\begin{array}{ccc}
	1 & 0 & 1 \\
	0 & 1 & 0 \\
	0 & 0 & 1 \\
\end{array}
\right).
$$
In general, by employing a series of appropriate similarity transformations, the block-diagonal matrix can progressively be approximated towards a diagonal matrix, indicating a tendency towards increased diagonal structure. At this point in the analysis,  an important question arises: to what extent do the diagonal elements of the final matrix approximate the T-eigenvalues of the original tensor $\mathcal{A}$?

We now present the Gershgorin disc theorem tailored specifically for third-order tensors.

\begin{theorem} {\rm (Gershgorin disc theorem for tensors).}\label{GerThm}
	Let  $\mathcal{A}\in \mathbb{C}^{m \times m \times n}$. Assume 
	\begin{equation*}
		X^{-1} (F_n^{\mathrm{H}} \otimes I_m) \operatorname{bcirc}(\mathcal{A}) 	(F_n \otimes I_m) X=D+F,
	\end{equation*}
	where $X$ is a similarity transformation matrix, $D = \operatorname{diag}(d_1, \ldots, d_{mn})$ and F has zero diagonal entries. Then we have 
	$$
	\Lambda({\mathcal{A}}) \subseteq \Gamma_{\mathcal{A}}:=\bigcup_{i=1}^{mn} \Theta_i,
	$$
	where 
	$\Lambda({\mathcal{A}})$ denotes the  set of all T-eigenvalues of tensor $\mathcal{A}$ and 
	\begin{equation}\label{GerDiscs}
		\Theta_i=\left\{z \in \mathbb{C}:\left|z-d_{i}\right| \leq \sum_{j=1}^{mn}\left|f_{i j}\right|\right\}, i = 1, \ldots, mn.
	\end{equation}
\end{theorem}   
\begin{proof}
	Suppose  $\lambda \in 	\Lambda({\mathcal{A}})$.  If $\lambda = d_i$ for some $i$, then  the conclusion is evidently valid.  It is important to acknowledge that the diagonal elements of $D$ may not necessarily correspond to the T-eigenvalues of $\mathcal{A}$. Hence, we make the assumption that
	$\lambda  \neq d_i$ for $i = 1, \ldots, mn$. 
	Notice that T-eigenvalues remain the same under both unitary and similarity transformations. Therefore, the matrix $I- (\lambda I - D)^{-1}F$ is singular, implying that $1$ is an eigenvalue of the matrix $(D-\lambda I)^{-1} F$.
On one hand,  it can be readily verified that
	\begin{equation*}
		\left\|(D-\lambda I)^{-1} F\right\|_{\infty} \geq 1,
	\end{equation*}
	or else 	the matrix $I- (\lambda I - D)^{-1}F$ will be  nonsingular.  On the other hand, we have
	\begin{equation*}
		\left\|(D-\lambda I)^{-1} F\right\|_{\infty} =
		\frac{1}{\left|d_{k}-\lambda\right|}
		\sum_{j=1}^{mn} \left|f_{k j}\right|
	\end{equation*}
	for some $k \in \{1,2,\ldots, mn\}$. Combining the above two inequalities, we get
	$\left|\lambda- d_{k}\right| \leq  \sum_{j=1}^{mn} \left|f_{k j}\right|,
	$
	which further implies $\lambda \in 	\Theta_k$.  Note that $\lambda$ is arbitrary and we complete the proof.\qed
	
\end{proof}

\begin{figure}[h] 
	\centering
	\subfigure{\includegraphics[width=2.35in, height=1.8in]{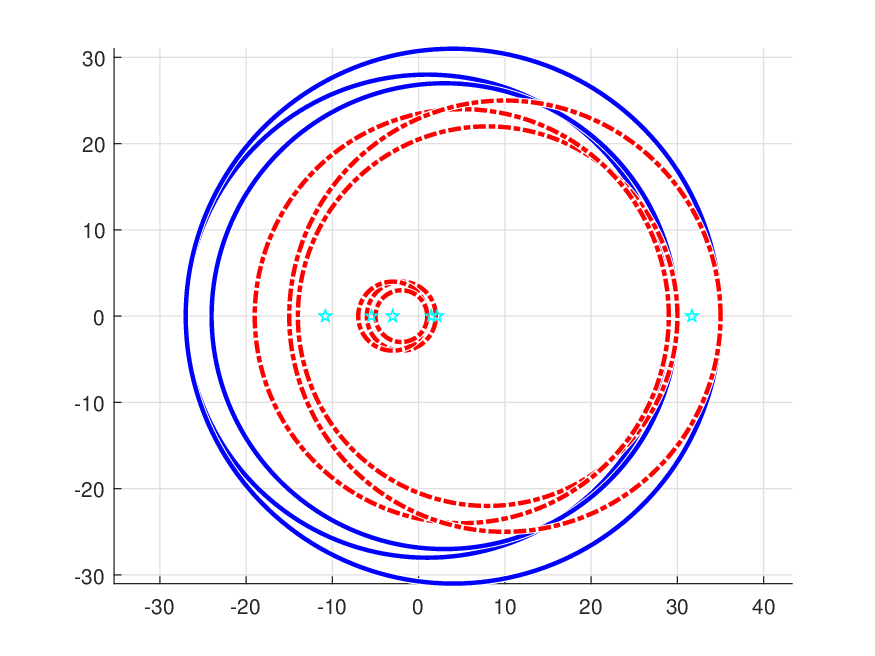}}
	\subfigure{\includegraphics[width=2.35in, height=1.8in]{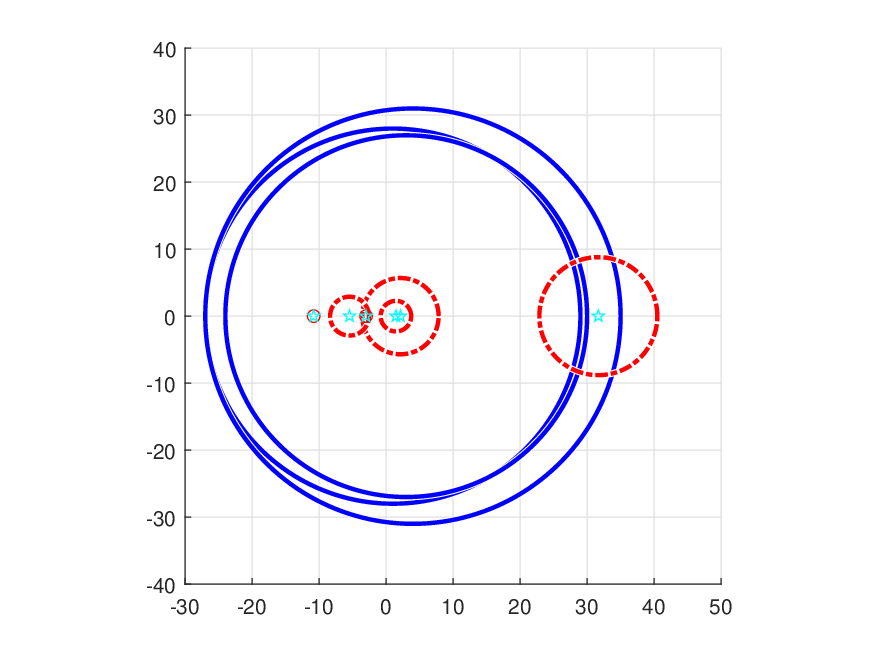}}
	\caption{
		The Gershgorin discs  (represented by the blue solid lines), obtained by Theorem $5.2$ given in  \cite{cao2021tensor}, are compared with the Gershgorin discs derived from  Theorem \ref{GerThm} presented in this paper (represented by the red dash-dot lines) under two similarity transformations for Example  \ref{ExampleCom}.
		(Left: $X = I$; Right: $X$ is a specifically selected transformation)} \label{FigEx3-1}
\end{figure}

\begin{example}\label{ExampleCom}
	Let $\mathcal{A}\in \mathbb{R}^{3 \times 3 \times 2}$.  Its frontal slices are $A_1$ and $A_2$, with their entries 
	$$
	A_1 = \left(
	\begin{array}{ccc}
		3 & 7 & 6 \\
		4 & 1 & 9 \\
		5 & 9 & 4 \\
	\end{array}
	\right), A_2 = \left(
	\begin{array}{ccc}
		5 & 7 & 2 \\
		1 & 4 & 10 \\
		2 & 9 & 6 \\
	\end{array}
	\right).
	$$
	By Theorem $5.2$ given in \cite{cao2021tensor}, we can draw the Gershgorin discs that contain all T-eigenvalues of tensor $\mathcal{A}$. The corresponding results, depicted by blue solid lines, are visually represented  in Fig. \ref{FigEx3-1}.
	
	To illustrate the Gershgorin discs generated by our presented result (cf. Theorem \ref{GerThm}), we employ two distinct similarity transformations. Initially, the matrix $\operatorname{bcirc}(\mathcal{A})$ can be  block-diagonalized by using the normalized DFT matrix. And we choose the transformation matrix $X$ to be the identity matrix, denoted as $X = I_{mn}$, where $m = 3$ and $n = 2$. 
	Then 
	\begin{equation*}
		I_{mn}^{-1} (F_n^{\mathrm{H}} \otimes I_m) \operatorname{bcirc}(\mathcal{A}) 	(F_n \otimes I_m) I_{mn}=D+F :=M= \left(
		\begin{array}{cc}
			M_1 & O  \\
			O & M_2  \\
		\end{array}
		\right),
	\end{equation*}
	where $O$ corresponds to  the $3\times 3$ zero matrix. 
	
	The Gershgorin discs, defined by \eqref{GerDiscs}, are depicted in the left picture of Fig. \ref{FigEx3-1} as red dash-dot circles. Notably, these red dash-dot circles are contained within the blue solid circles, signifying a tighter bound compared to the result presented in \cite{cao2021tensor}.
	Subsequently, we endeavor to seek smaller Gershgorin discs by designing a new transformation $X_{\text{new}}$. We apply Schur triangularization to matrices $M_1$ and $M_2$. 
	Let 
	$
	M_i = S_i T_i S_i^{-1}, i = 1,2.
	$ Set 
	$$
	X_{new} = \left(
	\begin{array}{cc}
		S_1 & O  \\
		O & S_2  \\
	\end{array}
	\right).
	$$
	Then we get
	\begin{equation*}
		X_{new}^{-1} (F_n^{\mathrm{H}} \otimes I_m) \operatorname{bcirc}(\mathcal{A}) 	(F_n \otimes I_m) X_{new}=D_{new}+F_{new},
	\end{equation*}
	in which we adopt the similar notations as presented in Theorem \ref{GerThm}.
	In this case, $F_{\text{new}}$ is an upper quasi-triangular matrix that contains more zero entries than $F$. By applying the criterion \eqref{GerDiscs} once again, we establish new Gershgorin circles, displayed in the right picture of Fig. \ref{FigEx3-1}. Evidently, the new red dash-dot circles are significantly smaller than the blue solid ones.
	
	In both pictures within Fig. \ref{FigEx3-1}, the cyan pentagrams represent true T-eigenvalues of $\mathcal{A}$. This observation highlights that opting for an appropriate transformation matrix can lead to tighter bounds or even yield the true T-eigenvalues.
	The result presented in \cite{cao2021tensor} is characterized by its simplicity and conciseness, which consequently incurs less computational cost. However, this advantage comes at the cost of producing only rough approximations for the bounds. In contrast, our derived result permits the adjustment of the transformation matrix to achieve more precise outcomes, albeit at the expense of increased computational resources.
\end{example}

\begin{remark}
	Eigenvalue sensitivity analysis is a subject of particular interest to many researchers, especially in the context of the symmetric eigenproblem for real matrices, where orthogonal transformations are commonly employed. Notably, classical results such as the Gershgorin disk theorem and the perturbation result Wielandt-Hoffman theorem \cite{Golub2013matrix} for real symmetric matrices  are well-established and widely recognized.  However, it is important to note that these results are not directly applicable to the real third-order tensor case. The primary reason for this limitation is that the block-diagonalized matrix of a real symmetric tensor, obtained via the normalized DFT matrix, may not retain its symmetry property. 
	Consider the symmetric tensor $\mathcal{A}$ with three frontal slices: $A_1 = \begin{bmatrix} 1 & 2 ; 2 & 4 \end{bmatrix}$, $A_2 = \begin{bmatrix} 5 & 6 ; 7 & 8 \end{bmatrix}$, and $A_3 = \begin{bmatrix} 5 & 7 ; 6 & 8 \end{bmatrix}$, represented in MATLAB notation. This serves as a meaningful illustrative example.
\end{remark}

\subsection{Bauer-Fike Theorem for Tensors}

The Bauer-Fike theorem is a classical result for complex-valued diagonalizable matrices, dealing with eigenvalue perturbation theory. It provides an absolute upper bound for the deviation of a perturbed matrix eigenvalue from a chosen eigenvalue of the original matrix, estimated by the product of the condition number of the eigenvector matrix and the perturbation's norm \cite{Bauer-Fike}.

Extending the Bauer-Fike theorem to the case of non-diagonalizable matrices has been explored in \cite{1986Generalization,Golub2013matrix}. 
Moreover,  the perturbation result depending on  part of the spectra of a matrix with disjoint spectral sets was also considered in  \cite{1986Generalization}. 

In this subsection, we first present a generalization of the renowned Bauer-Fike theorem, applicable not only to diagonalizable matrices but also to the third-order tensor case. Furthermore, we extend it into other general forms to encompass non-diagonalizable matrices within the context of third-order tensors.

Before presenting the main results, we introduce the definitions of tensor-$p$ norms, following a similar formulation to the tensor spectral norm employed in tensor robust principal component analysis \cite{lu2019tensor}. Additionally, we define the tensor condition number for simplicity.  
Let $\mathcal{A}\in \mathbb{C}^{m \times m \times n}$. The tensor-$p$ norm    is defined as $\|\mathcal{A}\|_p := \|\operatorname{bcirc}(\mathcal{A})\|_p$, and the condition number of $\mathcal{A}$ is denoted as  $\kappa_{p}(\mathcal{A}) :=\|\operatorname{bcirc}(\mathcal{A})\|_{p}\left\|\operatorname{bcirc}(\mathcal{A}^{-1})\right\|_{p}$.

\begin{theorem}[Bauer-Fike theorem for tensors] \label{Bauer Ten}
	Let  $\mathcal{A}\in \mathbb{C}^{m \times m \times n}$ be an F-diagonalizable tensor. This implies the existence of an invertible tensor $\mathcal{P}$ such that 
	\begin{equation}\label{F-diagonal}
		\mathcal{P}^{-1}*	\mathcal{A} * \mathcal{P}  = \mathcal{D}, 
	\end{equation}
	where $\mathcal{D}$ is an F-diagonal tensor. 
	Assuming $\mu$ is a T-eigenvalue of $\mathcal{A} + \delta \mathcal{A}$, with $\delta$ being a small number and indicating a small perturbation on the original tensor $\mathcal{A}$. Then, considering the spectral norm or Frobenius norm cases, there exists a T-eigenvalue $\lambda$ of $\mathcal{A}$ such that:
	\begin{equation}\label{2Fnorm}
		|\lambda-\mu| \leq \kappa_{p}(\mathcal{P})\|\delta \mathcal{A}\|_{p}, \ p = 2, F.
	\end{equation}
	Moreover, for  the $1$- and $\infty$-norms, we have 
	\begin{equation*}
		|\lambda-\mu| \leq \kappa_{p}(\mathcal{P})\kappa_{p}(F_n \otimes I_m)\|\delta \mathcal{A}\|_{p}, \ p = 1, \infty.
	\end{equation*}
\end{theorem}

\begin{proof}
	By Lemma \ref{Lemmabcirc}, we know that the equality (\ref{F-diagonal}) is equivalent to 
	$$
	(	\operatorname{bcirc}(\mathcal{P}))^{-1} \operatorname{bcirc}(\mathcal{A}) \operatorname{bcirc}(\mathcal{P}) = \operatorname{bcirc}(\mathcal{D}).
	$$
	Since the right-hand side of the above equality is a block circulant matrix, thus it can be block-diagonalized by the normalized DFT matrix. Then we have
	$$
	(F_n^{\mathrm{H}} \otimes I_m) \operatorname{bcirc}(\mathcal{D}) 	(F_n \otimes I_m) = D = 	\left[\begin{array}{cccc}
		{D^{(1)}} & {} & {} & {} \\
		{} & {D^{(2)}} & {} & {} \\
		{} & {} & {\ddots} & {} \\
		{} & {} & {} & {D^{(n)}} 
	\end{array}\right].
	$$
	By Lemma \ref{lemmaFdiagonal iff}, we can see that $D^{(1)}, D^{(2)}, \ldots, D^{(n)}$ are diagonal matrices since $\mathcal{D}$ is an F-diagonal tensor. Let $X = 	\operatorname{bcirc}(\mathcal{P}) (\kappa_{p}(\mathcal{A})) $. Then we get $X^{-1}\operatorname{bcirc}(\mathcal{A})X=D$. Hence by Bauer-Fike theorem \cite{Golub2013matrix}, there is an eigenvalue $\lambda$ of bcirc(A), hence a T-eigenvalue $\lambda$ of $\mathcal{A}$,  such that 
	\begin{align*}
		|\lambda-\mu| &\leq \|X^{-1}\| \|X\| \|\operatorname{bcirc}(\delta \mathcal{A})\|\\
		& \leq \kappa_{p}(\mathcal{P}) \kappa_{p}(F_n \otimes I_m)\|\delta \mathcal{A}\|_{p}.
	\end{align*}
	Under the $2$-norm case, since $F_n \otimes I_m$ and $F_n^{\mathrm{H}} \otimes I_m$ are unitary, then 
	$$
	\kappa_{2}(F_n\otimes I_m) = \|F_n \otimes I_m\|_2 \|F_n^{\mathrm{H}}  \otimes I_m\|_2 = 1,
	$$
	and we obtain the inequality \eqref{2Fnorm}. 
	The result for the Frobenius norm follows trivially from the definitions presented earlier, as the spectral norm of tensor $\mathcal{A}$ is not greater than its Frobenius norm.
	The proof is completed. \qed
\end{proof}

\begin{remark}
	At the same time,  Theorem 5.3 in \cite{cao2021tensor} has also addressed the Bauer-Fike theorem. The theorem states that under certain conditions, the inequality
	$
	|\lambda - \mu| \leq \|\mathcal{P}^{-1}\|_2\|\mathcal{P}\|_2\|\mathcal{A}-\mathcal{B}\|_2
	$
	holds. It is evident that this is a specific case of the result presented above, applicable only to the $2$-norm case.
\end{remark}

The following conclusion describes the relationship between the variation of T-spectra, i.e., the set of all T-eigenvalues, and the difference of two tensors. We omit the proof here as it can be easily obtained by using the above theorem.

\begin{corollary}\label{corollary21}
	Let  $\mathcal{A}, \mathcal{B}\in \mathbb{C}^{m \times m \times n}$. $\mathcal{A}$ is an F-diagonalizable tensor with decomposition as
	\begin{equation*}
		\mathcal{P}^{-1}*	\mathcal{A} * \mathcal{P}  = \mathcal{D}. 
	\end{equation*}
	Let $s_{\mathcal{A}} (\mathcal{B})$ be the distance of  two T-spectral sets $\Lambda_{\mathcal{A}} = \{\lambda_i\}_{i=1}^{mn}$ and $\Lambda_{\mathcal{B}} = \{\mu_i\}_{i=1}^{mn}$.  That is, 
	$$
	s_{\mathcal{A}} (\mathcal{B}) = \max_{1\leq j \leq mn} \{
	\min_{1\leq i \leq mn} |\lambda_i - \mu_j|\} .
	$$
	Then the following conclusion holds,
	$$
	s_{\mathcal{A}} (\mathcal{B}) \leq \kappa_{2}(\mathcal{P}) \|\mathcal{B} - \mathcal{A}\|_2.
	$$
	Moreover, if $\mathcal{A}$ is a normal tensor, then 
	$$
	s_{\mathcal{A}} (\mathcal{B}) \leq \|\mathcal{B} - \mathcal{A}\|_2.
	$$
\end{corollary}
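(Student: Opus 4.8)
The plan is to obtain both inequalities as direct consequences of the Bauer--Fike Theorem for tensors (\ref{Bauer Ten}), taking the perturbation to be the full difference $\delta \mathcal{A} = \mathcal{B} - \mathcal{A}$. First I would fix an arbitrary $T$-eigenvalue $\mu_j$ of $\mathcal{B} = \mathcal{A} + \delta \mathcal{A}$. Applying \ref{Bauer Ten} in the $2$-norm case produces a $T$-eigenvalue of $\mathcal{A}$, say $\lambda_i \in \Lambda_{\mathcal{A}}$, with $|\lambda_i - \mu_j| \leq \kappa_2(\mathcal{P}) \|\delta \mathcal{A}\|_2 = \kappa_2(\mathcal{P}) \|\mathcal{B} - \mathcal{A}\|_2$. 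Since this $\lambda_i$ belongs to $\Lambda_{\mathcal{A}} = \{\lambda_i\}_{i=1}^{mn}$, we immediately deduce $\min_{1 \leq i \leq mn} |\lambda_i - \mu_j| \leq \kappa_2(\mathcal{P}) \|\mathcal{B} - \mathcal{A}\|_2$ for this particular index $j$. Because $j$ was arbitrary, taking the maximum over all $j$ yields $s_{\mathcal{A}}(\mathcal{B}) = \max_{1 \leq j \leq mn} \min_{1 \leq i \leq mn} |\lambda_i - \mu_j| \leq \kappa_2(\mathcal{P}) \|\mathcal{B} - \mathcal{A}\|_2$, which is precisely the first claim.

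For the normal case I would specialize the diagonalizing transform. When $\mathcal{A}$ is normal, \ref{normal diagonal} furnishes a unitary tensor $\mathcal{U}$ with $\mathcal{U} * \mathcal{A} * \mathcal{U}^{\mathrm{H}} = \mathcal{D}$, so one may take $\mathcal{P} = \mathcal{U}^{\mathrm{H}}$ in the $F$-diagonalization $\mathcal{P}^{-1} * \mathcal{A} * \mathcal{P} = \mathcal{D}$. The decisive step is to verify that the condition number of such a $\mathcal{P}$ equals $1$. By \ref{Lemmabcirc}(c) we have $\operatorname{bcirc}(\mathcal{U}^{\mathrm{H}}) = (\operatorname{bcirc}(\mathcal{U}))^{\mathrm{H}}$, and the defining relation $\mathcal{U}^{\mathrm{H}} * \mathcal{U} = \mathcal{U} * \mathcal{U}^{\mathrm{H}} = \mathcal{I}_{mm\ell}$ translates, via \ref{Lemmabcirc}(b), into $(\operatorname{bcirc}(\mathcal{U}))^{\mathrm{H}} \operatorname{bcirc}(\mathcal{U}) = \operatorname{bcirc}(\mathcal{U})(\operatorname{bcirc}(\mathcal{U}))^{\mathrm{H}} = I$. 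Hence $\operatorname{bcirc}(\mathcal{U})$ is a unitary matrix, so $\|\operatorname{bcirc}(\mathcal{P})\|_2 = \|\operatorname{bcirc}(\mathcal{P}^{-1})\|_2 = 1$ and therefore $\kappa_2(\mathcal{P}) = 1$. Substituting this into the first inequality collapses it to $s_{\mathcal{A}}(\mathcal{B}) \leq \|\mathcal{B} - \mathcal{A}\|_2$.

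The whole argument is essentially routine once \ref{Bauer Ten} is available, so there is no genuine obstacle; the only point demanding a little care is confirming that $\operatorname{bcirc}$ carries a unitary tensor to a unitary matrix, which is exactly what forces the condition number down to one in the normal case. This is where the compatibility of $\operatorname{bcirc}$ with tensor-tensor products and conjugate transposes recorded in \ref{Lemmabcirc} does the essential work.
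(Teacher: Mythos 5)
Your proposal is correct and follows exactly the route the paper intends: the paper omits the proof of this corollary, stating only that it "can be easily got by the above theorem," i.e., by applying \ref{Bauer Ten} with $\delta\mathcal{A} = \mathcal{B}-\mathcal{A}$ to each $\mu_j$ and maximizing over $j$, with the normal case handled via \ref{normal diagonal} and $\kappa_2(\mathcal{P})=1$ for a unitary diagonalizer. Your filled-in details, including the check that $\operatorname{bcirc}$ maps unitary tensors to unitary matrices, are accurate.
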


In general, most third-order tensors are not F-diagonalizable, which implies that \eqref{F-diagonal} is usually not satisfied for a given tensor. Therefore, we cannot obtain an F-diagonal tensor through tensor-tensor multiplication by an invertible tensor.
In such cases, we resort to the following decomposition.

\begin{lemma}{\rm  (\cite{Kilmer2011report,Lund2020,Miao2020T}, T-Schur decomposition).}  \label{T-Schur decomposition}
	Let  $\mathcal{A}\in \mathbb{C}^{m \times m \times n}$.  Then there exists a unitary tensor $\mathcal{Q}$ such that 
	\begin{equation*}
		\mathcal{Q}^{-1}*	\mathcal{A} * \mathcal{Q}  = \mathcal{T} = \mathcal{D} + \mathcal{N},
	\end{equation*}
	where $\mathcal{D}$ is an F-diagonal tensor, and each frontal slice of $\mathcal{N}$ is strictly upper triangular.
\end{lemma}

In the following, we present two general cases of the Bauer-Fike theorem for tensors (i.e., Theorem \ref{Bauer Ten}). These theorems are based on the T-Schur decomposition and can be regarded as generalizations of the Bauer-Fike theorems for non-diagonalizable matrices given in \cite{1986Generalization,Golub2013matrix}. The detailed proof are provided in the appendix section.

\begin{theorem}\label{Theorem Schur} {\rm (Generalization of Bauer-Fike theorem for tensors I).}
	Let $\mathcal{Q}^{-1}*	\mathcal{A} * \mathcal{Q}  = \mathcal{D} + \mathcal{N}$ be a T-Schur decomposition of $\mathcal{A}\in \mathbb{C}^{m \times m \times n}$. Given $\mathcal{B} \in \mathbb{C}^{m \times m \times n}$ and $\varepsilon$ as a small scalar, suppose $\mu$ is a T-eigenvalue of $\mathcal{A} + \varepsilon \mathcal{B}$, and let $q$ be the smallest positive number such that $|N|^q = 0$, in which
	$$
	N := 	 \left[\begin{array}{llll}
		N^{(1)} & & & \\
		& N^{(2)} & & \\
		& & \ddots & \\
		& & & N^{(n)}
	\end{array}\right] =  (F_n^{\mathrm{H}} \otimes I_m) 	 \operatorname{bcirc}(\mathcal{N}) 
	(F_n \otimes I_m),
	$$
	and $|N| = (|N_{ij}|)$ denotes the absolute of a matrix element-wisely.  Then, for the spectral and Frobenius norms, we establish the following bounds:
	\begin{equation} \label{General BF-theorem}
		\min _{\lambda \in \Lambda(\mathcal{A})}|\lambda-\mu| \leq \max \left\{\theta, \theta^{1 / q}\right\},
	\end{equation}
	in which
	\begin{equation*}
		\theta =  \| \varepsilon \mathcal{B} \|_p \sum_{k=0}^{q-1}\|N\|_p^{k}, \ p = 2, F.
	\end{equation*}
	Additionally, for the $1$- and $\infty$-norms, the bounds are given by:
	\begin{equation} \label{General BF-theorem-p}
		\min _{\lambda \in \Lambda(\mathcal{A})}|\lambda-\mu| \leq \max\{\theta_p, \theta_p^{1/q}\},
	\end{equation}
	where
	$$
	\theta_p = \|\varepsilon \mathcal{B} \|_p \kappa_{p}(\mathcal{Q}) \kappa_{p}(F_n \otimes I_m) \sum_{k=0}^{q-1}\|N\|_2^{k}, \ p = 1, \infty.
	$$
\end{theorem}

We now present a more generalized result of  Theorem \ref{Bauer Ten}. In contrast to the previous theorem, which involves an F-diagonal tensor, the following result considers the block-diagonal case.

Let $\mathcal{A}\in \mathbb{C}^{m\times m\times n}$.	Notice that $\operatorname{bcirc}(\mathcal{A}) $ can be  block-diagonalized as follows, 
$$
(F_n^{\mathrm{H}} \otimes I_m) \operatorname{bcirc}(\mathcal{A}) 	(F_n \otimes I_m)  = 	\left[\begin{array}{cccc}
	{A^{(1)}} & {} & {} & {} \\
	{} & {A^{(2)}} & {} & {} \\
	{} & {} & {\ddots} & {} \\
	{} & {} & {} & {A^{(n)}} 
\end{array}\right].
$$
By Lemma \ref{lemmaFdiagonal iff}, we know that $A^{(i)}$  where  $i = 1, \ldots, n$ may not diagonal since generally tensor $\mathcal{A}$ is  not F-diagonal.
For each matrix $A^{(i)}$, suppose  $X^{(i)}$ is a transformation matrix such that   $(X^{(i)})^{-1}A^{(i)} X^{(i)} = \operatorname{diag}(A^{(i)}_{k_i})$ where $A^{(i)}_{k_i}$ is in triangular Schur form with 
$$
A^{(i)}_{k_i} = D^{(i)}_{k_i} + N^{(i)}_{k_i}, \ k_i = 1, \ldots, \ell_i,
$$
in which $D^{(i)}_{k_i}$ is diagonal for each $i$.
Denote $$ \operatorname{bcirc}(\mathcal{X})  = 	(F_n \otimes I_m) \operatorname{diag}(X^{(1)}, X^{(2)}, \ldots, X^{(n)}) (F_n^{\mathrm{H}} \otimes I_m).$$ Then we get
\begin{equation*} \label{Exp bcircA tilta}
	\begin{aligned}
		& (F_n^{\mathrm{H}} \otimes I_m)  \operatorname{bcirc}(\mathcal{X})^{-1}  \operatorname{bcirc}(\mathcal{A})  \operatorname{bcirc}(\mathcal{X})  
		(F_n \otimes I_m)\\
		= &
		\left[\begin{array}{llll}
			\operatorname{diag}(A^{(1)}_{k_1}) & & & \\
			& \operatorname{diag}(A^{(2)}_{k_2}) & & \\
			& & \ddots & \\
			& & & \operatorname{diag}(A^{(n)}_{k_n})
		\end{array}\right]
		=\left[\begin{array}{ccccccc}
			A_{1}^{(1)} & & & & & & \\
			& \ddots & & & & & \\
			& & A_{\ell_{1}}^{(1)} & & & & \\
			& & & \ddots & & & \\
			& & & & A_{1}^{(n)} & & \\
			& & & & & \ddots & \\
			& & & & & & A_{\ell_{n}}^{(n)}
		\end{array}\right]
	\end{aligned}
\end{equation*}
\begin{equation*} 
	\begin{aligned}
		= & \left[\begin{array}{ccccccc}
			D_{1}^{(1)} & & & & & & \\
			& \ddots & & & & & \\
			& & D_{\ell_{1}}^{(1)} & & & & \\
			& & & \ddots & & & \\
			& & & & D_{1}^{(n)} & & \\
			& & & & & \ddots & \\
			& & & & & & D_{\ell_{n}}^{(n)}
		\end{array}\right]
		+
		\left[\begin{array}{ccccccc}
			N_{1}^{(1)} & & & & & & \\
			& \ddots & & & & & \\
			& & N_{\ell_{1}}^{(1)} & & & & \\
			& & & \ddots & & & \\
			& & & & N_{1}^{(n)} & & \\
			& & & & & \ddots & \\
			& & & & & & N_{\ell_{n}}^{(n)}
		\end{array}\right]\\
		:= & \tilde{D} + \tilde{N}.
	\end{aligned}
\end{equation*}

By the above analysis, we have the following conclusion.

\begin{theorem} \label{General two}{\rm (Generalization of Bauer-Fike theorem for tensors II).}
	If $\mu$ is a T-eigenvalue of $\mathcal{A} + \varepsilon \mathcal{B}$ and $q$ is the dimension of
	$A_{k_i}^{(i)}$,  then we have
	\begin{equation*}
		\min _{\lambda \in \Lambda(\mathcal{A})}|\lambda-\mu| \leq \max \left\{\theta_1, \theta_1^{1 / q}\right\},
	\end{equation*}
	where
	\begin{equation*}
		\theta_1 = C\varepsilon \|  \mathcal{B}\|_p \kappa_p(\mathcal{X}), \ p = 2, F.
	\end{equation*}
	and $C = \sum_{k=0}^{q-1}\|N^i\|_2^{k}$ provided that $\max _{j}\left\|\left(A^{(i)}_j-\mu I\right)^{-1}\right\|$ occurring at $j = k_i$.
	
	For  $1$- and $\infty$-norms, under the above condition, we get 
	$$
	\min _{\lambda \in \Lambda(\mathcal{A})}|\lambda-\mu| \leq \max\{\theta_2, \theta_2^{1/q}\},
	$$
	where
	$$
	\theta_2 = C\varepsilon \|  \mathcal{B} \|_p \kappa_p(\mathcal{X}) \kappa_{p}(F_n \otimes I_m),\ p = 1, \infty.
	$$
	
\end{theorem}

\subsection{Kahan Theorem for Tensors}
The perturbation of a Hermite tensor by another Hermite tensor is studied in \cite[Theorem 4.1]{Jin2020}. In the following, we present a result involving the perturbation of a Hermite tensor by any tensors.

\begin{theorem} {\rm(Kahan theorem for tensors).} \label{KahanThm}
	Let  $\mathcal{A} \in \mathbb{R}^{m \times m \times n}$  be a Hermite tensor.  Suppose that  its T-spectral set is denoted as $\Lambda_{\mathcal{A}} = \{\lambda_i\}_{i=1}^{mn}$  such that its T-eigenvalues are arranged in a non-increasing order:
	\begin{equation*}
		\lambda_{\max }=\lambda_{1} \geq \lambda_{2} \geq \cdots \geq \lambda_{mn-1} \geq \lambda_{mn}=\lambda_{\min }.
	\end{equation*}
	Suppose that $\mathcal{B} = \mathcal{A}   + \mathcal{E} $ and let $\Lambda_{\mathcal{B}} = \{\beta_k + i\gamma_k\}_{k=1}^{mn}$  such that 
	$	\beta_{1} \geq \beta_{2} \geq \cdots \geq \beta_{mn-1} \geq \beta_{mn}.
	$
	Let  $$E_y = \frac{\operatorname{bcirc}(\mathcal{E})-\operatorname{bcirc}(\mathcal{E})^{\mathrm{H}}}{2i} $$
	and
	$$
	\sigma_k = \{  \beta + i\gamma \in \mathbb{C}: |\beta + i\gamma - \lambda_k| \leq \|\mathcal{E}\|_2, |\gamma| \leq  \|E_y\|_2    \}.
	$$
	Then 
	$$
	\Lambda_{\mathcal{B}} \subset  \bigcup_{k=1}^{mn}	\sigma_k.
	$$
\end{theorem}   

\begin{proof}
	On one hand, as stated in Lemma \ref{normal diagonal}, a Hermite tensor can be F-diagonalized by a unitary tensor. According to Corollary \ref{corollary21}, there exists a T-eigenvalue $\lambda_k$ such that $|\beta + i\gamma - \lambda_k| \leq \|\mathcal{E}\|_2$ for any given T-eigenvalue $\beta + i\gamma$ of $\mathcal{B}$.
	
	On the other hand, suppose that $\mathcal{B}*\mathcal{X} = (\beta + i\gamma) \mathcal{X}$, then by the definition of tensor-tensor multiplication, it is equivalent to 	
	$$
	\operatorname{bcirc}(\mathcal{B}) \operatorname{unfold}(\mathcal{X})  = (\beta + i\gamma) \operatorname{unfold}(\mathcal{X}).
	$$
	Thus, it is reasonable to assume that  $\|\operatorname{unfold}(\mathcal{X}) \|_2 = 1$, since $\mathcal{X}$ is nonzero, which implies that the vector $\operatorname{unfold}(\mathcal{X})$ is also nonzero.
	Therefore,
	$$
	( \operatorname{unfold}(\mathcal{X}))^{\mathrm{H}}\operatorname{bcirc}(\mathcal{B}) \operatorname{unfold}(\mathcal{X})  = \beta + i\gamma$$
	and
	 $$
	( \operatorname{unfold}(\mathcal{X}))^{\mathrm{H}}\operatorname{bcirc}(\mathcal{B})^{\mathrm{H}} \operatorname{unfold}(\mathcal{X})  = \beta - i\gamma,
	$$
	which implies that
	$$
	\gamma = \frac{( \operatorname{unfold}(\mathcal{X}))^{\mathrm{H}}[\operatorname{bcirc}(\mathcal{B}) -\operatorname{bcirc}(\mathcal{B})^{\mathrm{H}}  ] \operatorname{unfold}(\mathcal{X}) }{2i} = \operatorname{unfold}(\mathcal{X}))^{\mathrm{H}} E_y \operatorname{unfold}(\mathcal{X}).
	$$
	Thus $|\gamma| \leq  \|E_y\|_2$. 
	
	The conclusion can be obtained from the combination of these two  parts. \qed
\end{proof}

\section{Pseudospectra  of Third-Order Tensors}\label{Pseudospectra sec}
The pseudospectra of finite-dimensional matrices and their extension to linear operators in Banach space have been extensively investigated and summarized in the classical book by Trefethen and Embree \cite{trefethen2005spectra}. In the book, four different definitions of matrix pseudospectra are introduced and shown to be equivalent under certain conditions. The properties of pseudospectra are also discussed, along with a characterization of the pseudospectra for normal matrices. Additionally, for diagonalizable but not necessarily normal matrices, the corresponding Bauer-Fike theorem is presented, which can be found in \cite[Theorems 2.2, 2.3, and 2.4]{trefethen2005spectra}. The book also covers various methods for computing matrix pseudospectra; for more details, refer to \cite[Section 39]{trefethen2005spectra}. Notably, it also includes representative numerical results, including plots of pseudospectra that span nearly every section of the book.

Before presenting our own results, we review the definitions of pseudospectra for the matrix case.

\begin{definition}{\rm (\cite[Section 2]{trefethen2005spectra}).}\label{trepseu}
	Let $A\in\mathbb{C}^{m\times m}$ and $\varepsilon>0$ be arbitrary. $\lambda(A)$ is the set of  eigenvalues of $A$, and the convention $\|(zI-A)^{-1}\|=\infty$ for $z\in\lambda(A)$ is employed.\\
	{\rm (I)} The $\varepsilon$-pseudospectra $\lambda_{\varepsilon}(A)$ of $A$
	is the set of $z\in \mathbb{C}$ such that $\|(zI-A)^{-1}\|>\varepsilon^{-1}$.\\
	{\rm (II)} The $\varepsilon$-pseudospectra $\lambda_{\varepsilon}(A)$ of $A$
	is the set of $z\in \mathbb{C}$ such that $z\in \lambda(A+E)$ for some $E\in \mathbb{C}^{m\times m}$ with $\|E\| < \varepsilon$.\\
	{\rm (III)} The $\varepsilon$-pseudospectra $\lambda_{\varepsilon}(A)$ of $A$
	is the set of $z\in \mathbb{C}$ such that $\|(zI-A)\mathbf{v}\|<\varepsilon$ for some $v\in \mathbb{C}^m$ with $\|\mathbf{v}\| = 1$.\\
	{\rm (IV)} For $\|\cdot\| = \|\cdot\|_2$, $\lambda_{\varepsilon}(A)$ is the set of $z\in \mathbb{C}$ such that $\sigma_{min}(zI-A) <\varepsilon$ where $\sigma_{min}(\cdot)$ denotes the minimum singular value. 
\end{definition}

\begin{remark}\label{rem41}
	For any matrix $A \in \mathbb{C}^{m \times m}$, the equivalence of the three definitions (I), (II), and (III) given in Definition \ref{trepseu} has been established. Moreover, when the spectral norm is chosen, definition (IV) is shown to be equivalent to the other three characterizations of matrix pseudospectra \cite{trefethen2005spectra}.
	
\end{remark}

In this section, we delve into the study of pseudospectra for third-order tensors within the tensor-tensor multiplication framework. Specifically, we explore different formulations of pseudospectra for third-order tensors in Subsection \ref{Sec41}. Subsection \ref{Sec42} is dedicated to the examination of various properties of pseudospectra related to third-order tensors. Finally, in Subsection \ref{Sec43}, we present plots illustrating the computed results of $\varepsilon$-pseudospectra for given tensors  and give an example to illustrate the application for seeking more T-positive definite tensors by using the tensor $\varepsilon$-pseudospectra theory.

\subsection{Pseudospectra  of Third-Order Tensors under Tensor-Tensor Multiplication}\label{Sec41}

Various definitions on $\varepsilon$-pseudospectra of  $m \times m \times n$ tensors are studied in this subsection. 
If the norm $\|\cdot\|$ is not specified, we adopt the convention that it corresponds to the $p$ norm, where $p = 1, 2, \infty.$

\begin{definition}[$\varepsilon$-pseudospectra of  tensors]\label{Definition pseudospectra}
	Let $\mathcal{A} \in \mathbb{C}^{m \times m \times n}$.  Then the block circulant matrix $\operatorname{bcirc}(\mathcal{A})$ generated by  tensor  $\mathcal{A}$ can be factored as follows, 
	\begin{equation} \label{Factorization of bcirc(A)}
		\operatorname{bcirc}(\mathcal{A})
		=
		\left(F_{n} \otimes I_{m}\right) 
		\left[\begin{array}{cccc}
			{A^{(1)}} & {} & {} & {} \\
			{} & {A^{(2)}} & {} & {} \\
			{} & {} & {\ddots} & {} \\
			{} & {} & {} & {A^{(n)}}
		\end{array}\right]
		\left(F_{n}^{\mathrm{H}}\otimes I_{m}\right). 
	\end{equation}
	For each  matrix $A^{(i)}\in\mathbb{C}^{m\times m}$ where $i \in [n] :=\{ 1, \ldots, n\}$, we have
	$$
	\lambda_{\varepsilon}(A^{(i)}):= \left\{z \in \mathbb{C}:\left\|(z I_m-A^{(i)})^{-1}\right\| > \varepsilon^{-1}\right\},
	$$
	where $\varepsilon$ is an arbitrary positive scalar. 	If there are some $i$ such that $zI_m-A^{(i)}$ are singular for $z\in\lambda(A^{(i)})$, we define $\left\|(z I_m-A^{(i)})^{-1}\right\|=\infty$.
	
	In this definition, we denote 	the block diagonal matrix in \eqref{Factorization of bcirc(A)} as
	$\operatorname{diag}(A^{(1)}, \ldots, A^{(n)}) :=A$ and its spectrum as $\Lambda(A)$.
	
	{\rm (I)}	We call
	$$
	\Lambda_{\varepsilon}(\mathcal{A}):= \left\{z \in \mathbb{C}: \max_{i\in[n]}\left\|(z I_m-A^{(i)})^{-1}\right\| > \varepsilon^{-1}\right\}
	$$
	as the $\varepsilon$-pseudospectra of  tensor $\mathcal{A}$.
	
	{\rm (II)}  $\Lambda_{\varepsilon}(\mathcal{A})=\{z \in \mathbb{C}: z \in \Lambda(A+E)$ for some  $E \in \mathbb{C}^{mn \times mn}$ with $\|E\| < \varepsilon\}.$
	
	{\rm (III)}  $	
	\Lambda_{\varepsilon}(\mathcal{A})=\left\{z \in \mathbb{C}:\right.$ There exists $v \in \mathbb{C}^{mn}$ with $\|v\|=1$ such that $\left.\|(A-z I_{mn}) v\| < \varepsilon\right\}.$
\end{definition}

\begin{theorem}
	The three different characterizations (I), (II),  and (III) on $\varepsilon$-pseudospectra of  third-order tensors 
	are equivalent.
\end{theorem}   
\begin{proof}
	For a block diagonal matrix, we find that $\|A\| = \max_{i\in[n]} \|A^{(i)}\|$, and the inverse of $A$ can be obtained by computing the inverse of each $A^{(i)}$. Therefore,  we have 
	$$
	\max_{i\in[n]}\left\|(z I_m-A^{(i)})^{-1}\right\| = \left\|(z I_{mn}-A)^{-1}\right\|,
	$$
	and consequently,
	\begin{equation*}
		\Lambda_{\varepsilon}(\mathcal{A})=\left\{z \in \mathbb{C}:\left\|(z I_{mn}-A)^{-1}\right\| > \varepsilon^{-1}\right\}.
	\end{equation*}
	The equivalence of the above equation with (II) and (III) can be easily deduced from the claims in Remark \ref{rem41}. \qed
\end{proof}

\begin{remark}\label{Remark4-1}
	Notice that $z\in \mathbb{C}$ and it is a variable,  while $A^{(i)}$ is stationary when the tensor $\mathcal{A}$ is given.  Therefore, by (I) in Definition \ref{Definition pseudospectra}, we can also see that 
	$$
	\Lambda_{\varepsilon}(\mathcal{A})= \bigcup_{i=1}^{n}	\Lambda_{\varepsilon}(A^{(i)}) =  \bigcup_{i=1}^{n} \left\{z \in \mathbb{C}:\left\|(z I_m-A^{(i)})^{-1}\right\| > \varepsilon^{-1}\right\}.
	$$
\end{remark}

In the case of the spectral norm, we provide the following definition.
\begin{definition}\label{Definition pseudospectra-singularValue}
	Let $\mathcal{A} \in \mathbb{C}^{m \times m \times n}$.  Then the block-circulant matrix $\operatorname{bcirc}(\mathcal{A})$ generated by the tensor  $\mathcal{A}$ can be factored as shown in (\ref{Factorization of bcirc(A)}). For each square matrix $A^{(i)}$, where $i \in [n]$, we have
	$$
	\Lambda_{\varepsilon}(A^{(i)})=\left\{z \in \mathbb{C}: \sigma_{\min }(z I_m-A^{(i)}) \leq \varepsilon\right\},
	$$
	where $\varepsilon$ is a positive scalar and $\sigma_{min}(\cdot)$ denotes the minimum singular value.
	We refer to 
	$$
	\Lambda_{\varepsilon}(\mathcal{A})= \bigcup_{i=1}^{n}	\Lambda_{\varepsilon}(A^{(i)})
	$$ 	
	as the $\varepsilon$-pseudospectra of  tensor $\mathcal{A}$.
\end{definition}

By Remark \ref{Remark4-1}, we get the following conclusion.
\begin{theorem}
	In the case of the spectral norm, the characterization given in Definition \ref{Definition pseudospectra-singularValue} is equivalent to the three characterizations (I), (II), and (III) given in Definition \ref{Definition pseudospectra}.
\end{theorem}

\subsection{Properties of Pseudospectra of Tensors}\label{Sec42}
In this subsection, we investigate the properties of pseudospectra for third-order tensors. Several fundamental results are presented in the following theorem.
\begin{theorem} \label{ProPseu}
	Let $\mathcal{A} \in \mathbb{C}^{m \times m \times n}$ and consider an arbitrary positive scalar $\varepsilon$. The following properties hold for the pseudospectra $\Lambda_{\varepsilon}(\mathcal{A})$:
	
	{\rm (I)} The set $\Lambda_{\varepsilon}(\mathcal{A})$  is nonempty, open, and bounded. Moreover, there are at most $nm$ connected components,   each containing one or more T-eigenvalues of $\mathcal{A}$.
	
	{\rm (II)} For any $c \in \mathbb{C}$, we have  $\Lambda_{\varepsilon}(\mathcal{A}+c)=c+\Lambda_{\varepsilon}(\mathcal{A})$, where $\mathcal{A}+c$ is shorthand for $\mathcal{A}+c\mathcal{E}$,  and $\mathcal{E}$ is the identity tensor with the same size as $\mathcal{A}$.
	
	{\rm (III)}  For any nonzero $c \in \mathbb{C}$, we have $\Lambda_{|c| \varepsilon}(c \mathcal{A})=c \Lambda_{\varepsilon}(\mathcal{A})$. 
	
	{\rm (IV)}  If the spectral norm is applied, then $\Lambda_{\varepsilon}\left(\mathcal{A}^{\mathrm{H}}\right)=\overline{\Lambda_{\varepsilon}(\mathcal{A})}$.
	
\end{theorem}

\begin{remark}
	
	By the results above, we can see that the function of pseudospectra on tensor $\mathcal{A}\in \mathbb{C}^{m \times m \times n}$ is linear.
\end{remark}

The properties of  pseudospectra on normal tensors are given next. 
\begin{theorem}\label{PseuNormal}{\rm (Pseudospectra of a normal tensor).}
	Let  $\Delta_{\varepsilon}$ be an open $\varepsilon$-ball.  That is, 
	$
	\Delta_{\varepsilon}=\{z \in \mathbb{C}:|z|<\varepsilon\}.
	$
	A sum of sets is defined as 
	$$
	\Lambda(\mathbf{\mathcal{A}})+\Delta_{\varepsilon}=\left\{z: z=z_{1}+z_{2}, z_{1} \in \Lambda(\mathbf{\mathcal{A}}), z_{2} \in \Delta_{\varepsilon}\right\},
	$$
	where $\Lambda(\mathbf{\mathcal{A}})$ is the T-spectra (sets of  T-eigenvalues) of the tensor $\mathcal{A}$.
	Then	for any tensor $\mathcal{A}\in \mathbb{C}^{m\times m \times n}$,
	we have 
	\begin{equation}\label{notnormal-subset}
		\Lambda_{\varepsilon}(\mathcal{A}) \supseteq \Lambda(\mathcal{A})+\Delta_{\varepsilon} \quad \forall \varepsilon>0.
	\end{equation}
	Moreover, if the tensor $\mathcal{A}$ is normal and $\|\cdot\|=\|\cdot\|_{2}$, then
	\begin{equation*}
		\Lambda_{\varepsilon}(\mathcal{A}) = \Lambda(\mathcal{A})+\Delta_{\varepsilon} \quad \forall \varepsilon>0.
	\end{equation*}
\end{theorem}

\begin{theorem}{\rm (Bauer-Fike theorem).}
	Suppose tensor $\mathcal{A} \in \mathbb{C}^{m\times m \times n}$ is F-diagonalizable, i.e., it has the decomposition \eqref{F-diagonal}.  If the spectral norm is applied, then for each positive scalar $\varepsilon$, we have
	$$
	\Lambda(\mathcal{A}) + \Delta_{\varepsilon} \subseteq     \Lambda_{\varepsilon}(\mathcal{A})
	\subseteq    \Lambda(\mathcal{A}) + \Delta_{\varepsilon \kappa_{2}(\mathcal{P})}.
	$$
\end{theorem}   
\begin{proof}
	We only need to prove the second inclusion. By the definition of $\varepsilon$-pseudospectra,  Lemma \ref{Lemmabcirc} and decompositions \eqref{F-diagonal} and \eqref{diagD}, it is not hard to find that
	\begin{equation*}
		\begin{aligned}
			\|(zI_{mn} -  A)^{-1}\| 
			&= \| \operatorname{bcirc}(\mathcal{P}) (zI_{mn} - \operatorname{bcirc}(\mathcal{D}))^{-1} \operatorname{bcirc}(\mathcal{P})^{-1}\| \\
			&\leq \frac{\kappa(\mathcal{P})}{\operatorname{dist}(z, \Lambda(\operatorname{bcirc}(\mathcal{D})))} = \frac{\kappa(\mathcal{P})}{\operatorname{dist}(z, \Lambda(D))}\\
			& = \frac{\kappa(\mathcal{P})}{\operatorname{dist}(z, \Lambda(\mathcal{A}))}.
		\end{aligned}
	\end{equation*}
	By (I) of  Definition \ref{Definition pseudospectra}, we get our conclusion. \qed
	
\end{proof}

We next present a result, which is a generalization of the Gershgorin disc theorem for tensors given in Theorem \ref{GerThm} under the condition $X$ is the identity matrix, 
 to  locate the $\varepsilon$-pseudospectra for T-eigenvalues of tensor $\mathcal{A} \in \mathbb{C}^{m \times m \times n}$.
  Let $\lambda \in 	\Lambda_{\varepsilon}(\mathcal{A})$. By (II) in Definition \ref{Definition pseudospectra} there exists a matrix $E \in \mathbb{C}^{mn \times mn}$ with $\|E\| < \varepsilon$ and a nonzero vector $\mathbf{y}\in \mathbb{C}^{mn}$ such that $(A+E)\mathbf{y}=\lambda \mathbf{y}$, then by similar proof given in  \cite{KostiPseudospectra2016} we obtain the following conclusion. 
\begin{theorem}[$\varepsilon$-pseudospectra Gershgorin sets]\label{Pseu_Loca}
	Let  $\mathcal{A} \in \mathbb{C}^{m \times m \times n}$ be an arbitrary tensor and $\varepsilon\geq 0$. Then 
	\begin{equation*}
		\Lambda_{\varepsilon}(\mathcal{A}) \subseteq \Gamma_{\varepsilon}(\mathcal{A}):=\bigcup_{i \in S}\left\{z \in \mathbb{C}:\left|a_{i i}-z\right| \leqslant R_i(A)+\varepsilon\right\}
	\end{equation*}
	where 
	$A = 	\left(F_{n}^{\mathrm{H}}\otimes I_{m}\right) \operatorname{bcirc}(\mathcal{A})
	\left(F_{n}\otimes I_{m}\right)$, $a_{ii}$ is the diagonal entry of $A$ and $R_i(A):=\sum_{j \in S \backslash\{i\}}\left|a_{i j}\right|$ for all $i\in S:=\{1,2,\cdots, mn\}$.
\end{theorem}

\begin{remark}
Pseudospectra localizations for matrices or (generalized) tensor eigenvalues \cite{qi2005eigenvalues} have several applications, including the search for more positive definite tensors \cite{LiLiuWei2019} and testing the stability of dynamical systems \cite{KostiPseudospectra2016}. The study of pseudospectra for T-eigenvalues of tensors will contribute to the research on T-positive definiteness of third-order tensors and T-semidefinite programming \cite{zheng2020t}.
Recently, a new class of multilinear time-invariant systems, dependent on the Einstein product, has been studied \cite{Chen2021}. In these systems, the states, inputs, and outputs are tensors. It would be interesting to explore multilinear time-invariant systems that depend on tensor-tensor multiplication and investigate their applications,  leaving this as a topic for future research.
\end{remark}

\subsection{Examples of the $\varepsilon$-Pseudospectra for Tensors}\label{Sec43}
The study of spectra and pseudospectra in matrix cases indicates that while eigenvalues are successful tools for solving mathematical problems in various fields, they may not always provide a satisfactory answer for questions that mainly depend on the spectra, especially for nonnormal matrices. As an alternative, pseudospectra attempt to provide approximate solutions by offering reasonably tight bounds and engaging geometric interpretations \cite{trefethen2005spectra}. Given the significance of pseudospectra in solving matrix problems, we aim to extend this tool to tensors based on the theoretical analysis in Subsection \ref{Sec41}.

In this subsection, we utilize the pseudospectra tool to approximate T-eigenvalues of third-order tensors. Several examples, adapted from the matrix case in \cite[Section 3]{trefethen2005spectra}, are provided with spectra and pseudospectra characterization plots to illustrate its utility.

\begin{figure}[h]
	\centering
	\includegraphics[width=\textwidth]{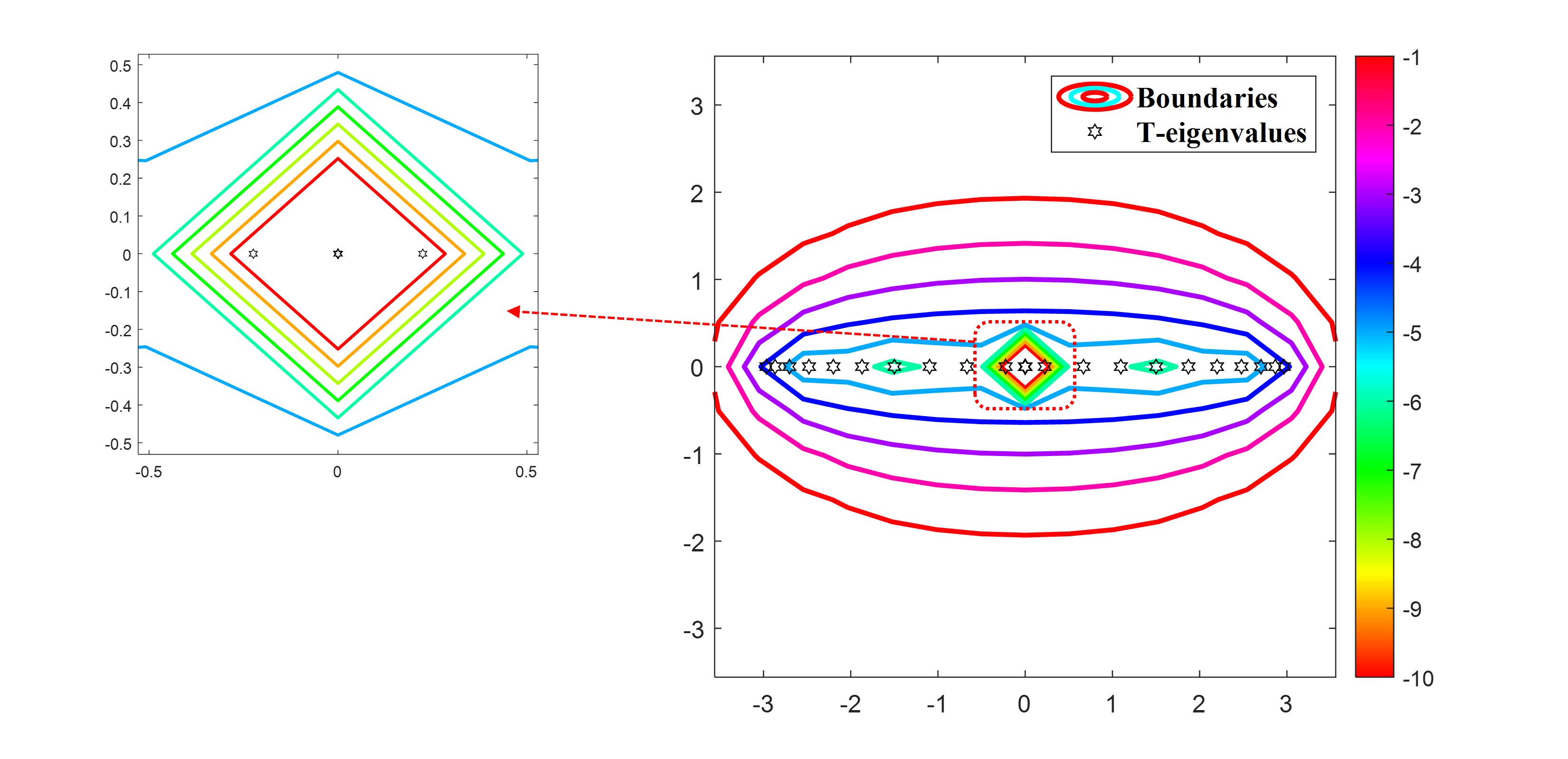}
	\caption{Boundaries of pseudospectra $\Lambda_{\varepsilon}(\mathcal{A})$ for tensor $\mathcal{A}\in \mathbb{R}^{20\times 20\times 3}$ with frontal slices $A_1 = A_2=A_3 = T_{pz}$ for different $\varepsilon=10^{-1}, 10^{-2}, \ldots, 10^{-10}$. The T-eigenvalues are plotted as hexagons.} \label{figure_sym0}
\end{figure}

\begin{example}
	Let $\mathcal{A}$ be a third-order tensor with three frontal faces $A_1, A_2$ and $A_3$. First, we consider an example that all frontal faces are the same and each frontal slice is a tridiagonal Toeplitz matrix.  That is, $A_1 = A_2=A_3 = T_{pz}$ with entries given as follows,
	\begin{equation*}
		T_{pz} =\left(\begin{array}{ccccc}
			0 & 1 & & & \\
			\frac{1}{4} & 0 & 1 & & \\
			& \ddots & \ddots & \ddots & \\
			& & \frac{1}{4} & 0 & 1 \\
			& & & \frac{1}{4} & 0
		\end{array}\right) \in \mathbb{R}^{N \times N}.
	\end{equation*}
\end{example}

We can see that 
the size of $\operatorname{bcirc}(\mathcal{A})$ is  $3N \times 3N$. However, it is non-symmetrical. Notice that $T_{pz}$ can be symmetrized by a diagonal similarity transformation, that is, 
$$D T_{pz} D^{-1}= S,$$ where $D=\operatorname{diag}\left([2,4, \ldots, 2^{N}]\right)$
and 
\begin{equation*}
	S=\left(\begin{array}{ccccc}
		0 & \frac{1}{2} & & & \\
		\frac{1}{2} & 0 & \frac{1}{2} & & \\
		& \ddots & \ddots & \ddots & \\
		& & \frac{1}{2} & 0 & \frac{1}{2} \\
		& & & \frac{1}{2} & 0
	\end{array}\right)\in \mathbb{R}^{N \times N}.
\end{equation*}

A tensor $\mathcal{D} \in \mathbb{R}^{N\times N\times 3}$ with frontal faces $D_1, D_2$ and $D_3$ is constructed as follows. That is,  we let $D_1 = D$ and choose $D_2$ and $D_3$ as the zero matrices $O_{N \times N}$. Then another tensor with three frontal faces $D^{-1}$, $O_{N\times N}$, and $O_{N\times N}$ would be the inverse of $\mathcal{D}$. By applying the tensor-tensor multiplication, we get
$$
\mathcal{D}*\mathcal{A}*\mathcal{D}^{-1} = \mathcal{S}.
$$
It is not hard to calculate that each frontal slice of tensor $\mathcal{S}$ is exactly the symmetric matrix $S$. Then  we know that all  T-eigenvalues of $\mathcal{A}$ are the same as those T-eigenvalues of $\mathcal{S}$ which are all real and only appear in the real axis since  $\operatorname{bcirc}(\mathcal{S})$ is symmetric.

However, the  pseudospectra of $\mathcal{A}$  lie a little far from the real axis. We consider a case where $N = 20$ and the results are given in Fig. \ref{figure_sym0}  in which the T-eigenvalues of tensor $\mathcal{A}$ are plotted as hexagons. The spectral norm is chosen here and we set $\varepsilon=10^{-1}, 10^{-2}, \ldots, 10^{-10}$.  By the numerical results, we can see that the boundaries of  $\Lambda_{\varepsilon}(\mathcal{A})$ may lie far beyond the real axis if  $\varepsilon$ is too big.

\begin{figure}[h]
	\centering
	\includegraphics[width=\textwidth]{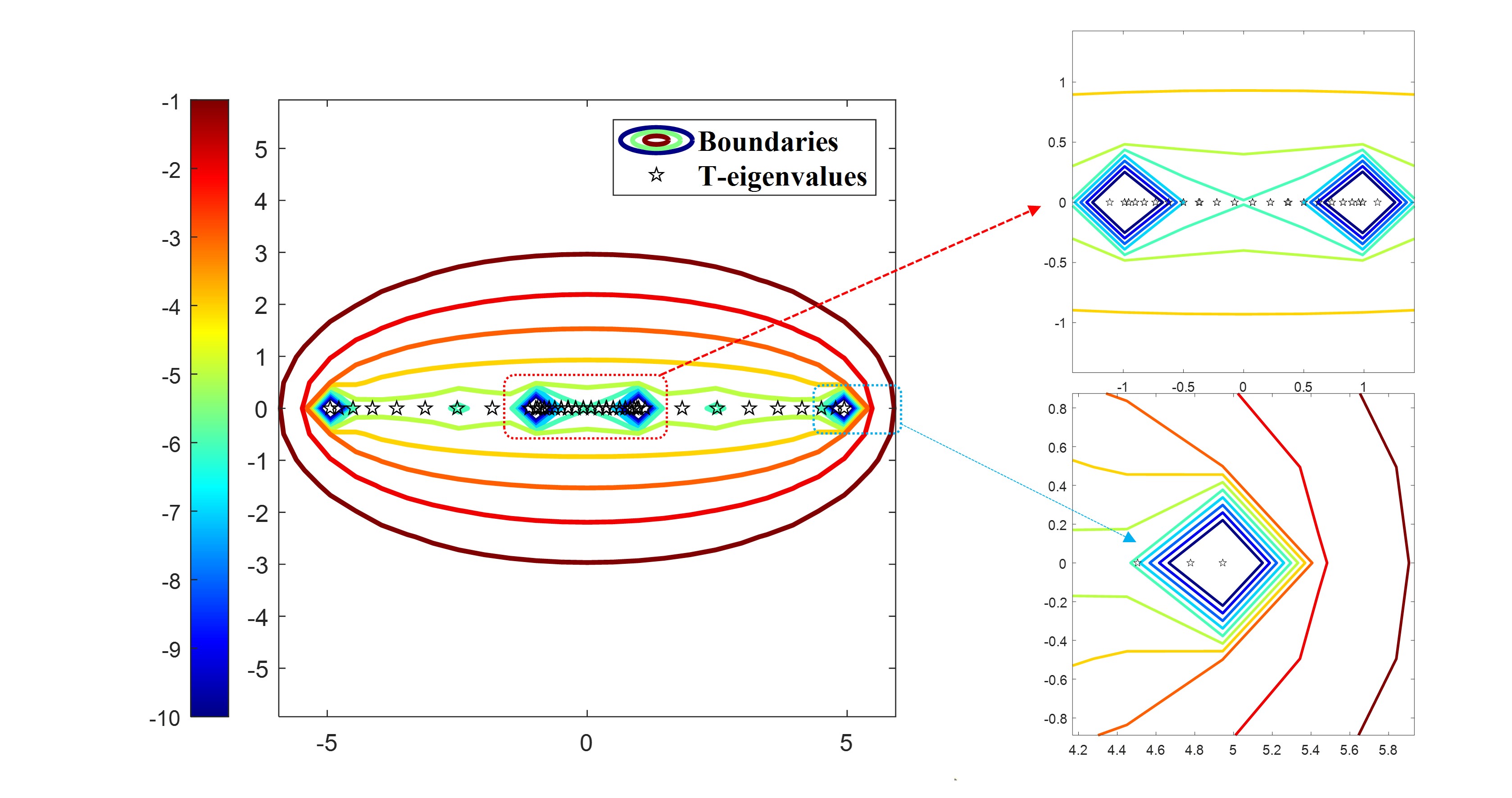}
	\caption{Boundaries of pseudospectra $\Lambda_{\varepsilon}(\mathcal{B})$ for tensor $\mathcal{B}\in \mathbb{R}^{20\times 20\times 3}$ with frontal slices $B_1 = T_{pz}$ and $ B_2=B_3 =  2 T_{pz}$
		for different $\varepsilon=10^{-1}, 10^{-2}, \ldots, 10^{-10}$.  The T-eigenvalues of tensor $\mathcal{B}$ are plotted as pentagons. } \label{figure_sym00}
\end{figure}

\begin{example}
	Many T-eigenvalues of  tensor $\mathcal{A}$ considered in the last example are zero. Another tensor $\mathcal{B}$ with frontal slices satisfying $B_1 = T_{pz}$ and $ B_2=B_3 =  2 T_{pz}$ is considered next. The corresponding matrix $ \operatorname{bcirc}(\mathcal{B})$ is non-symmetrical but all its eigenvalues are  real.  Similar numerical results on boundaries of pseudospectra $\Lambda_{\varepsilon}(\mathcal{B})$ 
	are given in Fig. \ref{figure_sym00}.
\end{example}

Tensors with all real T-eigenvalues are rare generally.  In the following, we consider
two cases that tensors have complex T-eigenvalues.

\begin{example}
	Suppose  $\mathcal{C}\in \mathbb{R}^{20\times 20\times 3}$ is a third-order tensor with frontal slices $C_1,C_2$ and $C_3$ such that $C_3 = 2C_2 = 4C_1 = 4T_{pz}$. By Definition \ref{def3-1} and using the $\operatorname{bcirc}(\cdot)$ operator,  T-eigenvalues which may complex are plotted as crosses `$\times$' in Fig. \ref{figure_sym000}. To approximate T-spectra of tensor $\mathcal{C}$, boundaries of $\varepsilon$-pseudospectra with various  $\varepsilon$ for tensor $\mathcal{C}$ are also presented in the same figure from which we can find that these boundaries are reasonably tight in locating those real or complex T-eigenvalues.
\end{example}

\begin{figure}[htb]
	\centering
	\includegraphics[width=\textwidth]{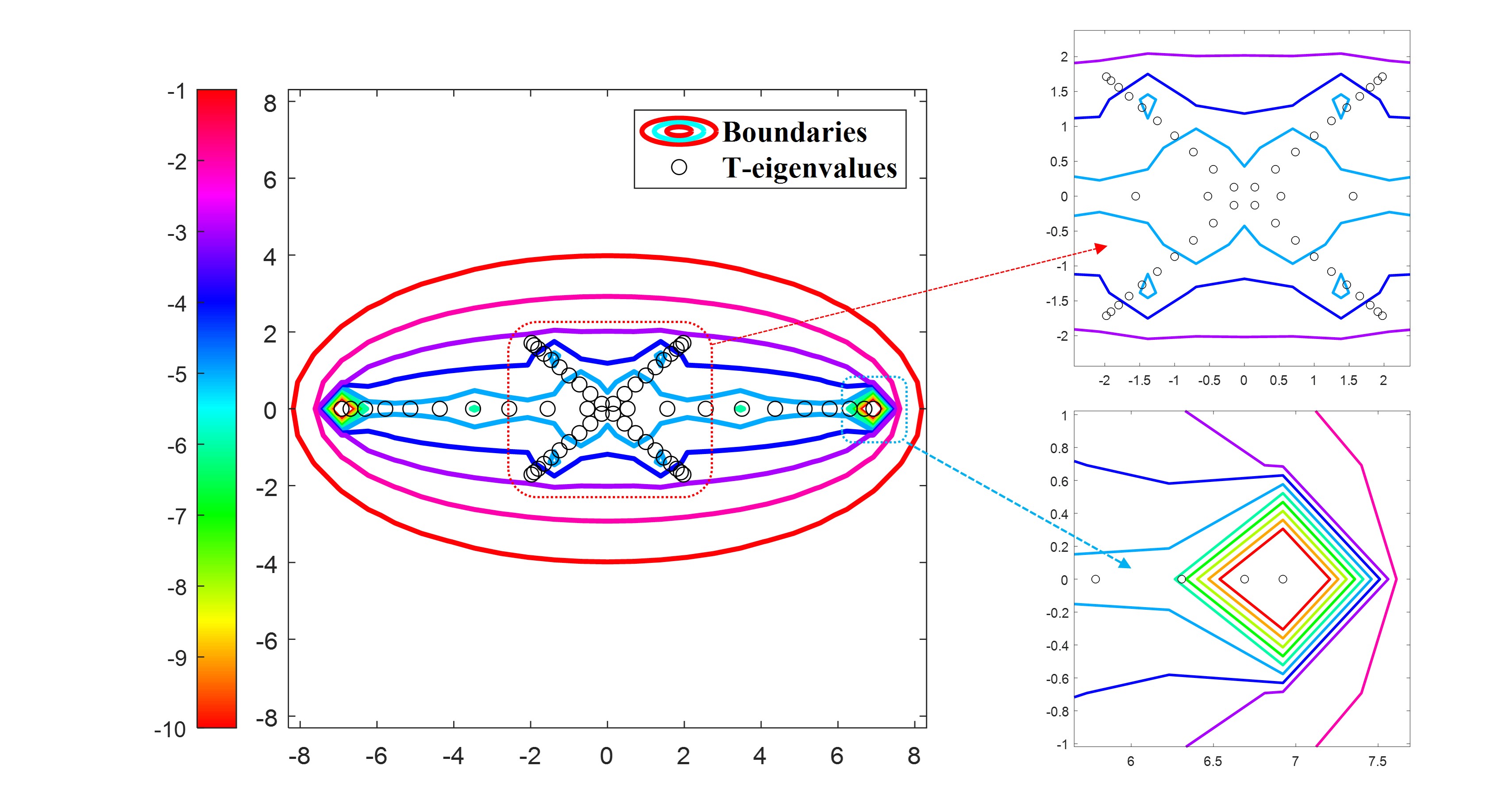}
	\caption{Boundaries of pseudospectra $\Lambda_{\varepsilon}(\mathcal{C})$ for  $\varepsilon=10^{-1}, 10^{-2}, \ldots, 10^{-10}$ under the condition that $C_3 = 2C_2 = 4C_1 = 4T_{pz}$.  The T-eigenvalues of tensor $\mathcal{C}$ are plotted as circles `$\circ$'.} \label{figure_sym000}
\end{figure}

\begin{example}
	 Let $F_1 = T_{pz}, F_2 = 10T_{pz}$ and $F_3 = \operatorname{eye}(N)$. Suppose that they are the three frontal slices of the tensor $\mathcal{F}\in \mathbb{R}^{20\times 20\times 3}$. By a  calculation without too much difficulty, one can get all T-eigenvalues of tensor $\mathcal{F}$. They spread along the complex plane with a shape like six-legged starfish; see Fig. \ref{figure_sym0000} for an intuitive display. To show the results of $\varepsilon$-pseudospectra, by setting similar parameters in plotting those  boundaries, we can also get relatively satisfactory results, just as the results shown in Fig. \ref{figure_sym0000}.
\end{example}
\begin{figure}[h]
	\centering
	\includegraphics[width=\textwidth]{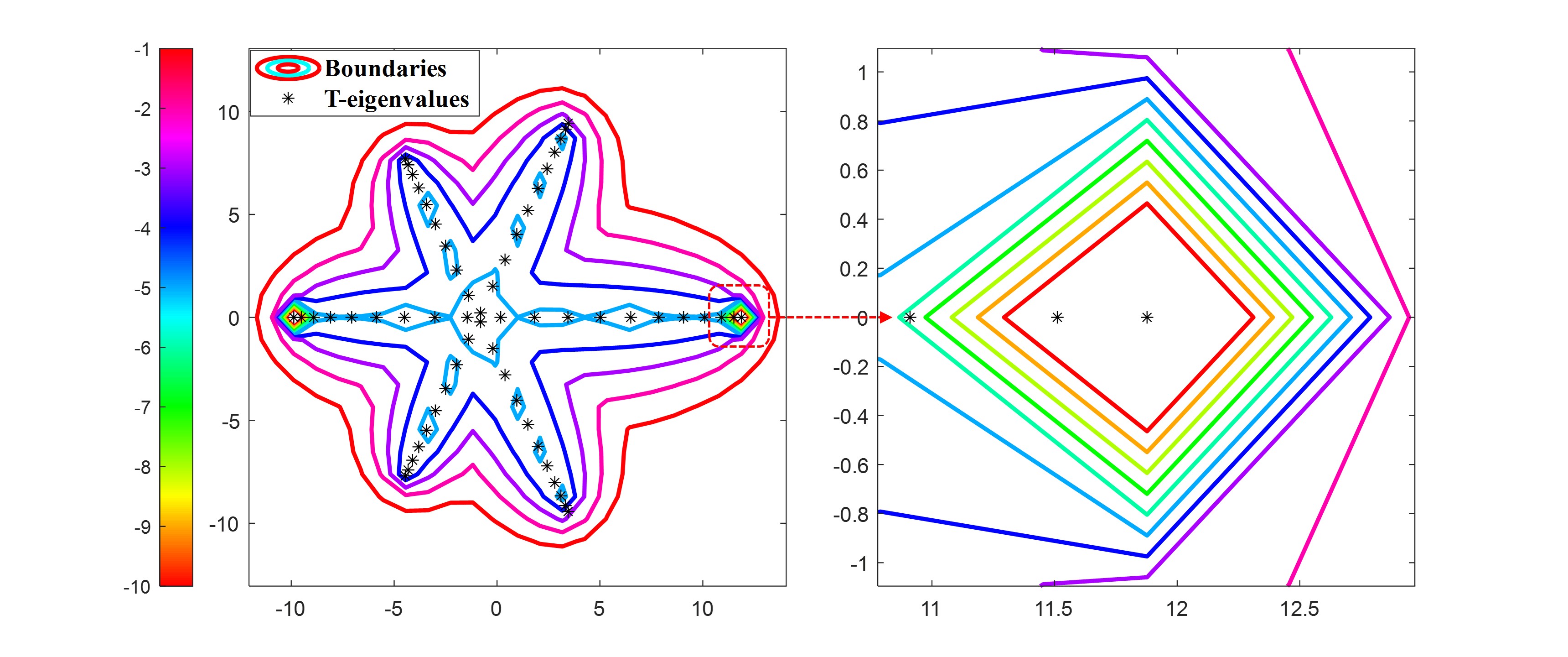}
	\caption{Boundaries of pseudospectra $\Lambda_{\varepsilon}(\mathcal{F})$ for  $\varepsilon=10^{-1}, 10^{-2}, \ldots, 10^{-10}$ under the condition that $F_1 = T_{pz}, F_2= 10 T_{pz}$ and $F_3 = \operatorname{eye}(N)$.  The T-eigenvalues of tensor $\mathcal{F}$ are plotted by `$*$'. } \label{figure_sym0000}
\end{figure}

\begin{example}
	We consider the validation of T-positive definiteness for third-order symmetric tensors 
	$\mathcal{A}$ through the application of the pseudospectra localizations strategy. Let 	$\mathcal{A}$ be a symmetric tensor with three frontal slices 
		$$
	A_1 = \left(
	\begin{array}{cc}
		20 & 2  \\
		2 & 20 
	\end{array}
	\right), A_2 = \left(
		\begin{array}{cc}
	-4 & 1  \\
		2 & -4
	\end{array}
	\right),   A_3 = \left(
	\begin{array}{cc}
	-4 & 2  \\
	1 & -4
\end{array}
	\right).
	$$
With the help of the normalized DFT matrix	apply to $\operatorname{bcirc}(\mathcal{A})$, we derive the block diagonal matrix $A = \operatorname{diag}(A^{(1)},A^{(2)},A^{(3)})$ with 
	$$
A^{(1)} = \left(
\begin{array}{cc}
	12 & 5  \\
	5 & 12 
\end{array}
\right), 
A^{(2)} = \left(
\begin{array}{cc}
	24& \frac{1}{2}+ \frac{\sqrt{3}}{2}i \\
	\frac{1}{2}- \frac{\sqrt{3}}{2}i & 24
\end{array}
\right),   
A^{(3)} = \left(
\begin{array}{cc}
	24& \frac{1}{2}- \frac{\sqrt{3}}{2}i \\
	\frac{1}{2}+ \frac{\sqrt{3}}{2}i & 24
\end{array}
\right).
$$
According to  Theorem \ref{Pseu_Loca}, it is observed that all  T-eigenvalues of the tensor $\mathcal{A}$ belong to the eigenvalue localization set 
$$\Gamma_{\varepsilon}(\mathcal{A})=\{z \in \mathbb{C}:|z-12| \leq 5+\varepsilon\} \bigcup\{z \in \mathbb{C}:|z-24| \leq1+\varepsilon\}$$
	where $\varepsilon\geq 0$. 
	Obviously, taking $\varepsilon = 0$, the $\varepsilon$-pseudospectra Gershgorin sets reduce to the T-spectrum $\Lambda(\mathcal{A})$ of tensor $\mathcal{A}$, while the localization set  $\Gamma_{\varepsilon}(\mathcal{A})$ becomes the Gersgorin sets  $\Gamma(\mathcal{A})$ given in  Theorem \ref{GerThm}. In this case, all sets lie in the open right-half complex plane, and hence the tensor $\mathcal{A}$ is T-positive definite.  
	
	Furthermore, by Theorem \ref{Pseu_Loca}, one can explore more positive definite tensors that surround the positive definite tensor $\mathcal{A}$. We only need to ensure that the localization set  $\Gamma_{\varepsilon}(\mathcal{A})$ lies in the open right-half complex plane, which implies that only the condition $\varepsilon < 7$ should be satisfied.  Therefore, for any tensor $\mathcal{E}$ satisfying $\|\mathcal{E}\| <7$, the tensor $\mathcal{A}+\mathcal{E}$ is T-positive definite if it is symmetric. To sum up, through the application of the $\varepsilon$-pseudospectral theory on third tensors, it becomes feasible to explore additional T-positive definite tensors surrounding a given T-positive definite tensor.
	
\end{example}

\begin{remark}
	Hundreds of years have witnessed the  power of the eigenvalue tool of matrices not only useful in practice but also fundamental in concept \cite{trefethen2005spectra}. However, in the nonnormal matrix case, eigenvalue analysis may reveal little significance and therefore pseudospectra analysis springs up to  remedy such a situation. In the above examples for tensor case,  the tensor $\mathcal{A}$ is not normal, that is, $\mathcal{A}*\mathcal{A}^{\mathrm{H}} \neq \mathcal{A}^{\mathrm{H}} * \mathcal{A}$. By an invertible tensor transformation $\mathcal{D}$ under tensor-tensor multiplication sense, we get a normal tensor $\mathcal{S}$ with all real T-eigenvalues. The first two examples show that the  pseudospectra of $\mathcal{A}$  lie a little far from the real axis that all T-eigenvalues spread on. This coincides  with the results of matrix case greatly \cite{trefethen2005spectra}. But the last two examples indeed indicate that pseudospectra tool could give a satisfactory capability of approximating all T-eigenvalues, especially for tensor $\mathcal{F}$. 
	
\end{remark}

\section{Conclusions}
This paper considers perturbation analysis to T-eigenvalues of third-order tensors, introducing generalizations of the Gershgorin disc, Bauer-Fike, and Kahan theorems and so on. The pseudospectral theory, including four characterizations and properties analysis as well as insightful visualizations of tensor $\varepsilon$-pseudospectra and application to seek more T-positive definite tensors,  on third tensors is presented.  Future work will explore the corresponding multilinear time-invariant systems and rectangular T-eigenvalue problems with dimensions $m\times n\times \ell$, focusing on their theoretical analysis and computational aspects.

\section*{Data availability}
No data was used for the research described in the article.

\section*{Declaration of Competing Interest}

The authors declare that they have no known competing financial interests or personal relationships that could have appeared to influence the work reported in this paper.

\section*{Acknowledgments}
\addcontentsline{toc}{section}{Acknowledgments}
{\small
	The authors would like to thank the handling editor and two referees for their very detailed comments.
	Changxin Mo acknowledges support from the National Natural Science Foundation of China (Grant No. 12201092),  the Natural Science Foundation Project of CQ CSTC (Grant No. CSTB2022NSCQ-MSX0896), the Science and Technology Research Program of Chongqing Municipal Education Commission 
	(Grant No. KJQN202200512),  the Chongqing Talents Project (Grant No. cstc2022ycjh-bgzxm0040), and the Research Foundation of Chongqing Normal University (Grant No. 21XLB040), P. R. of China. \\
\indent 	Weiyang Ding's research is supported by the Science and Technology
Commission of Shanghai Municipality under grants 23ZR1403000, 20JC1419500, and 2018SHZDZX0. \\
\indent	Yimin Wei is supported by the National Natural Science Foundation of China under Grant 12271108, the Ministry of Science and Technology of China under grant G2023132005L and   the Science and Technology Commission of Shanghai Municipality under grant 23JC1400501.
}


\section*{Appendix}
\textbf{Proof of Theorem \ref{Theorem Schur}}
\begin{proof} 
	The theorem is evident when $\mu \in \Lambda(\mathcal{A})$, as the left-hand sides  of \eqref{General BF-theorem} and \eqref{General BF-theorem-p} vanish. 
	Therefore,  we assume that $\mu \notin \Lambda(\mathcal{A})$. 
	By Lemma \ref{Lemmabcirc}, we can see that
	$$
	\mu I_{mn} - \operatorname{bcirc}(\mathcal{A} + \varepsilon \mathcal{B}) = 	\mu I_{mn} - \operatorname{bcirc}(\mathcal{A}) -  \operatorname{bcirc}(\varepsilon\mathcal{B}),
	$$
	and  moreover $\mu I_{mn} - \operatorname{bcirc}(\mathcal{A} + \varepsilon \mathcal{B}) $ is singular. This means that
	\begin{equation}\label{Exp314}
		(F_n^{\mathrm{H}} \otimes I_m)  \operatorname{bcirc}(\mathcal{Q})^{-1}	  [	\mu I_{mn} - \operatorname{bcirc}(\mathcal{A}) -  \operatorname{bcirc}(\varepsilon\mathcal{B})]
		\operatorname{bcirc}(\mathcal{Q}) (F_n \otimes I_m)	
	\end{equation}
	is also singular since the matrices multiplied on the left and right sides are nonsingular.
	Notice that 
	\begin{equation*}
		\begin{aligned}
			& (F_n^{\mathrm{H}} \otimes I_m) 	  \operatorname{bcirc}(\mathcal{Q})^{-1}  \operatorname{bcirc}(\mathcal{A})  \operatorname{bcirc}(\mathcal{Q})  
			(F_n  \otimes I_m)\\
			= &
			(F_n^{\mathrm{H}} \otimes I_m) 	 [
			\operatorname{bcirc}(\mathcal{D}) + \operatorname{bcirc}(\mathcal{N}) ]
			(F_n  \otimes I_m)\\
			= & \left[\begin{array}{llll}
				D^{(1)} & & & \\
				& D^{(2)} & & \\
				& & \ddots & \\
				& & & D^{(n)}
			\end{array}\right]
			+
			\left[\begin{array}{llll}
				N^{(1)} & & & \\
				& N^{(2)} & & \\
				& & \ddots & \\
				& & & N^{(n)}
			\end{array}\right]\\
			:=& D+N.
		\end{aligned}
	\end{equation*}
	Therefore \eqref{Exp314} can be rewritten as
	\begin{equation*}
		\mu I_{mn} - D - N - (F_n^{\mathrm{H}} \otimes I_m) 	  \operatorname{bcirc}(\mathcal{Q})^{-1}  \operatorname{bcirc}(\varepsilon \mathcal{B})  \operatorname{bcirc}(\mathcal{Q})  
		(F_n  \otimes I_m),
	\end{equation*}
	and thus the following matrix
	\begin{equation}\label{Exp313}
		I_{mn}  - (\mu I_{mn} - D - N)^{-1} (F_n^{\mathrm{H}} \otimes I_m)   \operatorname{bcirc}(\mathcal{Q})^{-1}  \operatorname{bcirc}(\varepsilon \mathcal{B})  \operatorname{bcirc}(\mathcal{Q})  
		(F_n \otimes I_m)
	\end{equation}
	is singular. 
	
	By the assumption  that $|N|^q = 0$  and 
	note that  $\mu I_{mn} - D $ is a nonsingular diagonal matrix,  it follows that $((\mu I_{mn} - D)^{-1}N)^q = 0$. Hence,
	\begin{equation*}
		((\mu I_{mn}-D)-N)^{-1}=\sum_{k=0}^{q-1}\left((\mu I_{mn}-D)^{-1} N\right)^{k}(\mu I_{mn}-D)^{-1},
	\end{equation*}
	and 
	\begin{equation*}
		\|((\mu I_{mn}-D)-N)^{-1}\| \leq
		\frac{1}{	\min _{\lambda \in \Lambda(\mathcal{A})}|\lambda-\mu|}
		\sum_{k=0}^{q-1}\left(\frac{\|N\|}{	\min _{\lambda \in \Lambda(\mathcal{A})}|\lambda-\mu|}\right)^{k}
	\end{equation*}
	holds under the $1$-, $2$- and $\infty$-norms cases.
	
	If 	$\min _{\lambda \in \Lambda(\mathcal{A})}|\lambda-\mu|\geq1$, then 
	\begin{equation*}
		\|((\mu I_{mn}-D)-N)^{-1}\| \leq
		\frac{1}{	\min _{\lambda \in \Lambda(\mathcal{A})}|\lambda-\mu|}
		\sum_{k=0}^{q-1}\|N\|^{k}.
	\end{equation*}
	In the case of  $\min _{\lambda \in \Lambda(\mathcal{A})}|\lambda-\mu|<1$, then 
	\begin{equation*}
		\|((\mu I_{mn}-D)-N)^{-1}\| \leq
		\frac{1}{	(\min _{\lambda \in \Lambda(\mathcal{A})}|\lambda-\mu|)^q}
		\sum_{k=0}^{q-1}\|N\|^{k}.
	\end{equation*}
	By (\ref{Exp313}), we can obtain 
	\begin{equation*}
		\begin{aligned}
			1 & \leq\left\|(\mu I_{mn} - D - N)^{-1} (F_n^{\mathrm{H}} \otimes I_m) \cdot 	  \operatorname{bcirc}(\mathcal{Q})^{-1}  \operatorname{bcirc}(\varepsilon \mathcal{B})  \operatorname{bcirc}(\mathcal{Q})  
			\cdot  (F_n \otimes I_m)\right\| \\
			&=\left\|(\mu I_{mn} - D - N)^{-1}\right\|\| \operatorname{bcirc}(\varepsilon \mathcal{B}) \|\|F_n^{\mathrm{H}} \otimes I_m\|\|F_n \otimes I_m\| \|\operatorname{bcirc}(\mathcal{Q})^{-1} \|\|\operatorname{bcirc}(\mathcal{Q}) \|.
		\end{aligned}
	\end{equation*}
	
	Combining the above theoretical analyses, 
	in the spectral norm case, we can see
	$$
	\min _{\lambda \in \Lambda(\mathcal{A})}|\lambda-\mu| \leq \| \operatorname{bcirc}(\varepsilon \mathcal{B}) \|_2 \sum_{k=0}^{q-1}\|N\|_2^{k} 
$$
or
$$
	(\min _{\lambda \in \Lambda(\mathcal{A})}|\lambda-\mu|)^q \leq \| \operatorname{bcirc}(\varepsilon \mathcal{B}) \|_2 \sum_{k=0}^{q-1}\|N\|_2^{k}
	$$
	for 
	$$\min _{\lambda \in \Lambda(\mathcal{A})}|\lambda-\mu|\geq1 
	\quad \text{or } \quad 
	\min _{\lambda \in \Lambda(\mathcal{A})}|\lambda-\mu|<1,
	$$
	respectively. 
	Let $\theta =  \| \operatorname{bcirc}(\varepsilon \mathcal{B}) \|_2 \sum_{k=0}^{q-1}\|N\|_2^{k} $. Then we get the result \eqref{General BF-theorem} for the spectral norm. Similar to the proof given in Theorem \ref{Bauer Ten}, the Frobenius norm case can be obtained easily.
	
	For the $1$- and $\infty$-norms, by using (\ref{Exp313}) again we get 
	$$
	\min _{\lambda \in \Lambda(\mathcal{A})}|\lambda-\mu| \leq \max\{\theta_p, \theta_p^{1/q}\},
	$$
	where
	$$
	\theta_p = \| \operatorname{bcirc}(\varepsilon \mathcal{B}) \|_p \kappa_{p}(\mathcal{Q}) \kappa_{p}(F_n \otimes I_m) \sum_{k=0}^{q-1}\|N\|_2^{k}. 
	$$
	The proof is completed. \qed
\end{proof}

\noindent \textbf{Proof of Theorem \ref{General two}}
\begin{proof}
	We only need to consider the case that $\mu$ is not a T-eigenvalue of $\mathcal{A}$. Hence $\mu I_{mn} - \tilde{D}-\tilde{N}$ is nonsingular.  Similar as (\ref{Exp313}), the matrix
	$$
	I_{mn} - (\mu I_{mn}- \tilde{D} - \tilde{N})^{-1} (F_n \otimes I_m) 	  \operatorname{bcirc}(\mathcal{X})^{-1}  \operatorname{bcirc}(\varepsilon \mathcal{B})  \operatorname{bcirc}(\mathcal{X})  
	(F_n^{\mathrm{H}} \otimes I_m)
	$$
	is singular.  By using a similar proof process given for Theorem \ref{Theorem Schur} in the above, we could get the conclusion. \qed
\end{proof}

\noindent \textbf{Proof of Theorem \ref{ProPseu}}
\begin{proof}
	To prove the assertion of (I), we utilize \eqref{Factorization of bcirc(A)} and Remark \ref{Remark4-1}, which allow us to exploit the properties of $\Lambda_{\varepsilon}(A^{(i)})$ for each matrix $A^{(i)}$. It is well-known that $\Lambda_{\varepsilon}(A^{(i)})$ is nonempty, open, and bounded, with at most $m$ connected components, each containing one or more eigenvalues of $A^{(i)}$ \cite[Theorem 2.4]{trefethen2005spectra}. Consequently, these same properties also hold for the given tensor $\mathcal{A}$ by the above analysis. Additionally, the number of connected components is bounded by $nm$ due to the relationship expressed by
	$$
	\Lambda_{\varepsilon}(\mathcal{A})= \bigcup_{i=1}^{n}	\Lambda_{\varepsilon}(A^{(i)}).
	$$

	Now, we proceed to part (II).  Denote  $\left(F_{n}^{\mathrm{H}} \otimes I_{m}\right)   \operatorname{bcirc}(\mathcal{A}) \left(F_{n}\otimes I_{m}\right) :=A$. 
	First, note that 
	\begin{align*}
		\operatorname{bcirc}(\mathcal{A}+c\mathcal{E}) 
		&= 
		\left[\begin{array}{ccccc}
			{A_{1} + cI_m} & {A_{n}} & {A_{n-1}} & {\cdots} & {A_{2}} \\
			{A_{2}} & {A_{1} + c I_m } & {A_{n}} & {\cdots} & {A_{3}} \\
			{\vdots} & {\ddots} & {\ddots} & {\ddots} & {\vdots} \\
			{A_{n}} & {A_{n-1}} & {\ddots} & {A_{2}} & {A_{1} + cI_m } 
		\end{array}\right]\\
		& =  \operatorname{bcirc}(\mathcal{A}) + c \operatorname{bcirc}(\mathcal{E}) \\
		& =  \left(F_{n} \otimes I_{m}\right)   A  \left(F_{m}^{\mathrm{H}}\otimes I_{n}\right) + c[\left(F_{m} \otimes I_{n}\right)   I_{mn}  \left(F_{m}^{\mathrm{H}}\otimes I_{n}\right)]\\
		& =  \left(F_{n} \otimes I_{m}\right)   (A+cI_{mn})  \left(F_{n}^{\mathrm{H}}\otimes I_{m}\right)\\
		&= \left(F_{n} \otimes I_{m}\right) 
		\left[\begin{array}{cccc}
			{A^{(1)}+cI_m} & {} & {} & {} \\
			{} & {A^{(2)}+cI_m} & {} & {} \\
			{} & {} & {\ddots} & {} \\
			{} & {} & {} & {A^{(n)}+cI_m}
		\end{array}\right]
		\left(F_{n}^{\mathrm{H}}\otimes I_{m}\right).
	\end{align*}
	Therefore, 	for any $c\in \mathbb{C}$, we have
	$$
	\Lambda_{\varepsilon}(\mathcal{A}+c)= \bigcup_{i=1}^{n}	\Lambda_{\varepsilon}(A^{(i)}+cI_m) = \bigcup_{i=1}^{n}	[\Lambda_{\varepsilon}(A^{(i)}) +c] = c + \Lambda_{\varepsilon}(\mathcal{A}).
	$$ 	
	We complete the proof of this part.
	
	For part (III), by Lemma \ref{Lemmabcirc}, we know that 
	$$
	\operatorname{bcirc}(c \mathcal{A})=c \operatorname{bcirc}(\mathcal{A}) = c  [(F_{n} \otimes I_{m})   A  \left(F_{n}^{\mathrm{H}}\otimes I_{m}\right)]   =  \left(F_{n} \otimes I_{m}\right)   (cA)  \left(F_{n}^{\mathrm{H}}\otimes I_{m}\right),
	$$
	which implies that 
	$$
	\Lambda_{|c| \varepsilon}(c \mathcal{A})  =  \bigcup_{i=1}^{n}	\Lambda_{|c|\varepsilon}(cA_i) = \bigcup_{i=1}^{n}c	\Lambda_{\varepsilon}(A_i) = c \bigcup_{i=1}^{n}	\Lambda_{\varepsilon}(A_i),
	$$
	since for any nonzero  $c\in \mathbb{C}$ and matrix $A\in \mathbb{C}^{m\times m}$, the following equality $$\Lambda_{|c|\varepsilon}(cA) = c	\Lambda_{\varepsilon}(A)$$
	holds \cite[Theorem 2.4]{trefethen2005spectra}. Thus we get the result that $\Lambda_{|c| \varepsilon}(c \mathcal{A})=c \Lambda_{\varepsilon}(\mathcal{A})$  for any nonzero $c \in \mathbb{C}$.
	
	Now, we prove the last part of this theorem. By Lemma \ref{Lemmabcirc}, we know that 
	$$
	\operatorname{bcirc}\left(\mathcal{A}^{\mathrm{H}}\right) = \left(F_{n} \otimes I_{m}\right)   A^{\mathrm{H}}  \left(F_{n}^{\mathrm{H}}\otimes I_{m}\right). 
	$$
	Therefore, 
	$$
	\Lambda_{\varepsilon}\left(\mathcal{A}^{{\mathrm{H}} }\right) = \bigcup_{i=1}^{n}	\Lambda_{\varepsilon}((A^{(i)})^{\mathrm{H}} ) = \bigcup_{i=1}^{n}\overline{	\Lambda_{\varepsilon}(A^{(i)})}
	=
	\overline{	\bigcup_{i=1}^{n}\Lambda_{\varepsilon}(A^{(i)})} = \overline{  \Lambda_{\varepsilon}\left(\mathcal{A}\right) },
	$$
	where the conclusion $\Lambda_{\varepsilon}(A^{\mathrm{H}})  = \overline{	\Lambda_{\varepsilon}(A)}$ under the two-norm case for any matrix $A\in \mathbb{C}^{m\times m}$ \cite[Theorem 2.4]{trefethen2005spectra}  is applied in the second equality. \qed
\end{proof}

\noindent \textbf{Proof of Theorem \ref{PseuNormal}}

\begin{proof}
	If $\lambda$ is a T-eigenvalue of tensor $\mathcal{A}$, then it is an eigenvalue of the matrix $\operatorname{bcirc}(\mathcal{A})$. Therefore, for any $\mu \in \mathbb{C}$, $\lambda + \mu$ is an eigenvalue of $\operatorname{bcirc}(\mathcal{A}) + \mu I$.  Note that $\|\mu I\| = |\mu|$,  and by the definition of pseudospectra on tensors, we obtain
	$\lambda + \mu \in \Lambda_{\varepsilon}(\mathcal{A})$ for any $|\mu| < \varepsilon$. Thus, we have completed the proof of (\ref{notnormal-subset}).
	
	For the normal tensor case, by  Lemmas \ref{normal diagonal}  and \ref{Lemmabcirc}, we obtain 
	$$
	\operatorname{bcirc}(\mathcal{U})^{\mathrm{H}} \operatorname{bcirc}(\mathcal{A})(\operatorname{bcirc}(\mathcal{U})) 
	= \operatorname{bcirc}(\mathcal{D}), 
	$$
	and 
	\begin{equation}\label{diagD}
		(F_n^{\mathrm{H}}  \otimes I_m) \operatorname{bcirc}(\mathcal{D})  (F_n\otimes I_m) = 
		\left[\begin{array}{cccc}
			{D^{(1)}} & {} & {} & {} \\
			{} & {D^{(2)}} & {} & {} \\
			{} & {} & {\ddots} & {} \\
			{} & {} & {} & {D^{(n)}}
		\end{array}\right] : = D,
	\end{equation}
	in which $D^{(i)}$ is diagonal for $i = 1, \cdots, n$ by Lemma \ref{lemmaFdiagonal iff}.  Also note that $\|\cdot\|=\|\cdot\|_{2}$, we may assume directly that $\mathcal{A}$ is F-diagonal. Therefore, the diagonal entries of $\operatorname{bcirc}(\mathcal{A})$ are exactly the T-eigenvalues. As we all know,  the $\varepsilon$-pseudospectra is just the union of the open $\varepsilon$-balls about the points of the spectra for any normal matrix; equivalently, we have
	\begin{equation*}
		\left\|(z-\operatorname{bcirc}(\mathcal{A}))^{-1}\right\|_{2}=\frac{1}{\operatorname{dist}(z, \Lambda(\operatorname{bcirc}(\mathcal{A})))},
	\end{equation*}
	which implies 
	$$
	\operatorname{dist}(z, \Lambda(\operatorname{bcirc}(\mathcal{A}))) < \varepsilon
	$$
	by the $\varepsilon$-pseudospectra of tensors. We get the conclusion since $\Lambda(\mathcal{A})+\Delta_{\varepsilon}$ is the same as $\{z: \operatorname{dist}(z, \Lambda(\mathcal{A}))<\varepsilon\}$. \qed
\end{proof}


\end{document}